\newtheorem{thm}{Theorem}[section]
\newtheorem{prop}[thm]{Proposition}
\newtheorem{lem}[thm]{Lemma}
\newtheorem{cor}[thm]{Corollary}
\newtheorem{con}[thm]{Conjecture}
\newtheorem{asm}{Assumption}
\theoremstyle{remark}
\newtheorem{rem}[thm]{Remark}
\theoremstyle{definition}
\newcommand{\ra}{\rightarrow}
\newcommand{\la}{\leftarrow}
\newcommand{\N}{\mathbb N}     
\newcommand{\Z}{\mathbb Z}     
\renewcommand{\a}{\alpha}
\renewcommand{\b}{\beta}
\renewcommand{\d}{\delta}
\newcommand{\e}{\varepsilon}
\renewcommand{\k}{\kappa}
\renewcommand{\l}{\lambda}
\renewcommand{\L}{\Lambda}
\newcommand{\s}{\sigma}
\newcommand{\fl}[1]{\lfloor #1 \rfloor}           
\newcommand{\cl}[1]{\lceil #1 \rceil}		  
\newcommand{\indd}[1]{ \mathbf{1}{ \{ #1 \} } }   
\newcommand{\be}{\begin{equation}}
\newcommand{\ee}{\end{equation}}
\newcommand{\nn}{\nonumber}
\newcommand{\iid}{i.i.d.\ }                       
\newcommand{\w}{\omega}                     
\renewcommand{\P}{\mathbb{P}}               
\newcommand{\E}{\mathbb{E}}                 
\newcommand{\vp}{\mathrm{v}_P}              
\newcommand{\tw}{\tilde{\omega}}	    
\title{Maximal Displacement for Bridges of Random Walks in a Random Environment}
\author{Nina Gantert \thanks{CeNos Center for Nonlinear Science and Institut f\"ur Mathematishe Statistik, Fachbereich Mathematik und Informatik, Einsteinstrasse 62, 48149 M\"unster, GERMANY. gantert@math.uni-muenster.de} \and Jonathon Peterson \thanks{
Cornell University, Malott Hall, Department of Mathematics,
Ithaca, NY 14853. peterson@math.cornell.edu} \thanks{Research supported in part by National Science Foundation grant DMS-0802942.}}
\date{June 16, 2010}
\begin{document}

\maketitle
\noindent
{\bf Abstract.}
It is well known that the distribution of simple random walks on $\Z$ conditioned on returning to the origin after $2n$ steps does not depend on $p= P(S_1 = 1)$, the probability of moving to the right. Moreover, conditioned on $\{S_{2n}=0\}$ the maximal displacement $\max_{k\leq 2n} |S_k|$ converges in distribution when scaled by $\sqrt{n}$ (diffusive scaling). 

We consider the analogous problem for transient random walks in random environments on $\Z$. We show that under the quenched law $P_\w$ (conditioned on the environment $\w$), the maximal displacement of the random walk when conditioned to return to the origin at time $2n$ is no longer necessarily of the order $\sqrt{n}$. If the environment is nestling (both positive and negative local drifts exist) then the maximal displacement conditioned on returning to the origin at time $2n$ is of order $n^{\kappa/(\kappa+1)}$, where the constant $\kappa>0$ depends on the law on environment. 
On the other hand, if the environment is marginally nestling or non-nestling (only non-negative local drifts) then the maximal displacement conditioned on returning to the origin at time $2n$ is at least $n^{1-\e}$ and at most $n/(\ln n)^{2-\e}$ for any $\e>0$. 

As a consequence of our proofs, we obtain precise rates of decay for $P_\w(X_{2n}=0)$. In particular, for certain non-nestling environments we show that $P_\w(X_{2n}=0) = \exp\{ -Cn -C'n/(\ln n)^2 + o(n/(\ln n)^2) \}$ with explicit constants $C,C'>0$. \\
{\bf Keywords: random walk in random environment, moderate deviations}.\\
2000 Mathematics subject classification: 60K37.\\
\noindent
{\bf R\'esum\'e.}
 Il est bien connu que la distribution d'une marche al\'eatoire simple sur $\Z$, condition\'ee \`a retourner \`a l'origine au temps $2n$ est ind\'ependante de $p= P(S_1 = 1)$, la probabilit\'e d'un pas vers la droite. De plus, conditionellement \`a $\{ S_{2n}=0\}$, le d\'eplacement maximum $\max_{k\leq 2n} |S_k|$, divis\'e par $\sqrt{n}$, converge en distribution. 

Nous consid\'erons le m\`eme probl\`eme pour les marches transientes en environement al\'eatoire sur $\Z$. Nous montrons que sous la loi ``quenched,'' le d\'eplacement maximum pour la marche  conditionn\'e a retourner \`a l'origine au temps $2n$ n'est pas toujours de l'ordre de $\sqrt{n}$. Si l'environement a des drifts locaux positifs et negatifs alors cet ordre de grandeur est $n^{\kappa/(\kappa+1)}$, o\`u $\kappa>0$ depend de la loi de l'environement. Mais, si l'environement n'a que des drifts locaux positifs ou nuls, alors cet ordre de grandeur est proche de $n$. 

Les preuves fournissent de plus l'ordre de grandeur de $P_\w(X_{2n}=0)$. Dans le cas o\`u les drifts locaux sont tous positifs nous montrons que $P_\w(X_{2n}=0) = \exp\{ -Cn -C'n/(\ln n)^2 + o(n/(\ln n)^2) \}$. \\
{\bf Mots cl\'es: marche al\'eatoire en environement al\'eatoire, d\'eviations moder\'ees}.\\

\section{Introduction.}
Let $(S_n)_{ n\geq 1}$ be a random walk with drift on the integers: let $p \in (0,1)$ and let $Y_1, Y_2, \ldots $ be a sequence of \iid random 
variables with $P(Y_1 =+1) = p = 1- P(Y_1 = -1)$, and $S_n = \sum\limits_{i=1}^n Y_i, n=1,2,3, \ldots $. Consider the conditioned law of $(\frac{S_k}{\sqrt{2n}})_{1\leq k \leq 2n}$, conditioned on the event $\{S_{2n} =0\}$. It is easy to see that this conditioned laws converge to the law of a Brownian bridge for $n \to \infty$, for any value of $p \in (0,1)$. In fact, for $p=\frac{1}{2}$, this is a consequence of Donsker's invariance principle, but by symmetry, the conditioned laws do not depend on the value of $p$. In particular,
the laws 
\be\label{srw}
P\left(\frac{1}{\sqrt{2n}} \max\limits_{1 \leq k \leq 2n}|S_k| \in \cdot \; \Bigl| S_{2n =0} \right)
\ee
converge, for $ n \to \infty$, to the law of the maximum of the absolute value of a Brownian bridge.

In this paper, we address a question originally posed to us by Itai Benjamini.
The question is, what happens if we replace $(S_n)$ with a random walk in random environment 
$(X_n)_{ n\geq 1}$?
In particular, what is the order of growth of $\max\limits_{1 \leq k \leq 2n}|X_k|$, conditioned on the event $\{X_{2n} =0\}$?
Is it diffusive as in (\ref{srw}) or superdiffusive or subdiffusive, respectively?

The random walk in random environment (abbreviated RWRE) is defined as follows.
Let $\omega =(\omega_x)_{x \in \Z}$ be a collection of random variables taking values in 
$(0,1)$ and let $P$ be the distribution of $\omega$. The random variable $\omega$ is called the ``random environment''. For each environment $\omega \in \Omega= (0,1)^{\Z}$ and $y\in \Z$, we define the 
RWRE starting at $y$ as the time-homogeneous Markov chain $(X_n)$ taking values in $\Z$, with $X_0=y$ and transition probabilities 
\[
P_\omega^y [X_{n+1} = x+1 \, |\, X_n =x] = \omega_x = 1-P_\omega^x [X_{n+1} = x-1 \, | \, X_n =x] , \quad n\geq 0.
\]
We equip $\Omega$ with its Borel $\sigma$-field
$\cal{F}$ and $\Z^\N$ with its Borel $\sigma$-field $\cal{G}$. The distribution of $(\omega, (X_n))$ is the probability measure $\P$ on $\Omega \times \Z^\N$ defined by 
\[
 \P[F\times G] = \int\limits_F P_\omega[G] P(d\omega), \quad F \in \mathcal{F}, \quad G \in\cal{G}.
\]
Since we will usually be concerned with random walks starting from the origin we will use $P_\w$ and $E_\w$ to denote $P_\w^0$ and $E_\w^0$, respectively. 
Expectations with respect to $P$, $P_\w$, and $\P$ will be denoted by $E_P$, $E_\w$, and $\E$, respectively.
$P_\w$ is referred to as the \emph{quenched} law of the random walk, while $\P$ is referred to as the averaged (or annealed) law.

The goal of this paper is to study the magnitude of the maximal displacement $\max\limits_{1 \leq k \leq 2n}|X_k|$ of a RWRE under the quenched law $P_\w$,  conditioned on the event $\{X_{2n} =0\}$. 
For a simple random walk, (\ref{srw}) tells us that the scaling is of order $\sqrt{n}$ (diffusive scaling). 
This is not the case for RWRE as we will show below.
In fact, super-diffusive, sub-diffusive and diffusive scaling are possible (depending on the law $P$ of the environment). 

Throughout the paper we will make the following assumptions. 
\begin{asm}\label{UEIIDasm}
The environment is i.i.d.\ and uniformly elliptic. That is, the random variables $\{\w_x\}_{x\in\Z}$ are \iid under the measure $P$, and there exists a constant $c > 0$ such that $P(\w_x \in [c, 1-c] ) = 1$. 
\end{asm}
Let $\rho_i= \rho_i(\omega): = (1-\omega_i)/\omega_i$, $i \in \Z$.
\begin{asm} \label{Tasm}
 $E_P \log \rho_0 < 0$. 
\end{asm}
As shown in \cite{sRWRE}, the second assumption implies that for $P$-almost all $\omega$, the Markov chain $(X_n)$ is transient and we have $X_n \to +\infty$, $P_\w$-a.s.
A lot more is known about this one-dimensional model; we will not give background here, but we refer to the survey 
by Ofer Zeitouni~\cite{zRWRE} for 
limit theorems, large deviations results, and for further references. 

Our main results are as follows.
Let $\w_{\min}:= \inf \{ x: P(\w_0 \leq x) > 0 \}$. 
We will distinguish between three different cases: $\w_{\min} < \frac{1}{2}$ (nestling case), $\w_{\min} = \frac{1}{2}$ (marginally nestling case) and $\w_{\min} > \frac{1}{2}$ (non-nestling case). It turns out that in the nestling case, the magnitude of $\max\limits_{1 \leq k \leq 2n}|X_k|$ under the conditioned law is of order $n^\beta$, 
where the exponent $\beta =\kappa/(\kappa + 1)$ for a parameter $\kappa >0$ depending on the law of the environment (see Theorem \ref{nestlingcase} for the precise statement). The cases $\beta > \frac{1}{2}$, $\beta = \frac{1}{2}$ and $\beta < \frac{1}{2}$ are all possible, and we have $\beta > \frac{1}{2}$ if and only if $(X_n)$ has a positive linear speed. In the marginally nestling and the non-nestling cases, we give additional assumptions excluding the deterministic case (i.e. the case of constant environment). 
We then show that the magnitude of $\max\limits_{1 \leq k \leq 2n}|X_k|$ under the conditioned law is between $n^{1-\e}$ and $n/(\ln n)^{2-\e}$ for any $\e>0$ (see Theorems \ref{margnestlingcase} and \ref{nonnestlingcase} for precise statements). We conjecture in fact that the correct order of magnitude is $n/(\ln n)^2$ (see Conjecture \ref{probconv}). 

As a consequence of the proofs of our main theorems we also obtain precise asymptotics on the rate of decay of the quenched probabilities $P_\w(X_{2n} = 0)$. In the nestling and marginally nestling cases the decay rates are $\exp\{-n^{\k/(\k+1)+o(1)}\}$ and $\exp\{-Cn/(\ln n)^2+o(n/(\ln n)^2)\}$ for an explicit constant $C>0$, respectively (see Lemmas \ref{Xneq0} and \ref{mnatzero}). These decay rates are not surprising given the previous results on moderate and large deviations for RWRE in \cite{fgpMD} and \cite{ppQSubexp}. The non-nestling case turns out to be more interesting. 
It was known previously from large deviation results that $P_\w(X_{2n}=0) = \exp\{-Cn +o(n)\}$ for an explicit constant $C>0$. 
For our results, however, we needed some sub-exponential corrections to this rate of decay. In Corollary \ref{nnatzero} we show in fact that 
\[
 P_\w(X_{2n}= 0) = \exp \left\{ -Cn -C'n/(\ln n)^2 + o(n/(\ln n)^2) \right\}, \qquad P-a.s.,
\]
for explicit positive constants $C$ and $C'$ depending on the law on environments. 

A brief remark is in order about what our main results tell us about what a RWRE bridge looks like. 
One of the dominating features of one-dimensional RWRE is the trapping effect of the environment. Informally, a ``trap'' is an atypical section of the environment for which the probability to stay confined to the interval for a long time is abnormally large. One way for the random walk to be back at the origin after $2n$ steps is for the random walk to go quickly to a certain large trap, stay in the trap until almost time $2n$, and then go quickly back to the origin at the end. For such a strategy, there is both a cost and a benefit for a larger maximal displacement. Travelling to and from a trap that is far from the origin requires backtracking a large distance, but travelling farther from the origin allows the random walk to reach a larger trap. Balancing these costs and benefits leads to a lower bound on the rate of decay of $P_\w(X_{2n} = 0)$. 
The difficulty in deriving the asymptotic decay of $P_\w(X_{2n} = 0)$ lies in proving a corresponding upper bound which suggests that indeed a RWRE bridge typically spends most of its time in a few large traps. Figure \ref{simulation} shows the simulation of a RWRE bridge where this trapping behavior is clearly seen to occur. 

\begin{figure}\label{simulation}
 \centering
 \includegraphics[width=0.7\paperwidth]{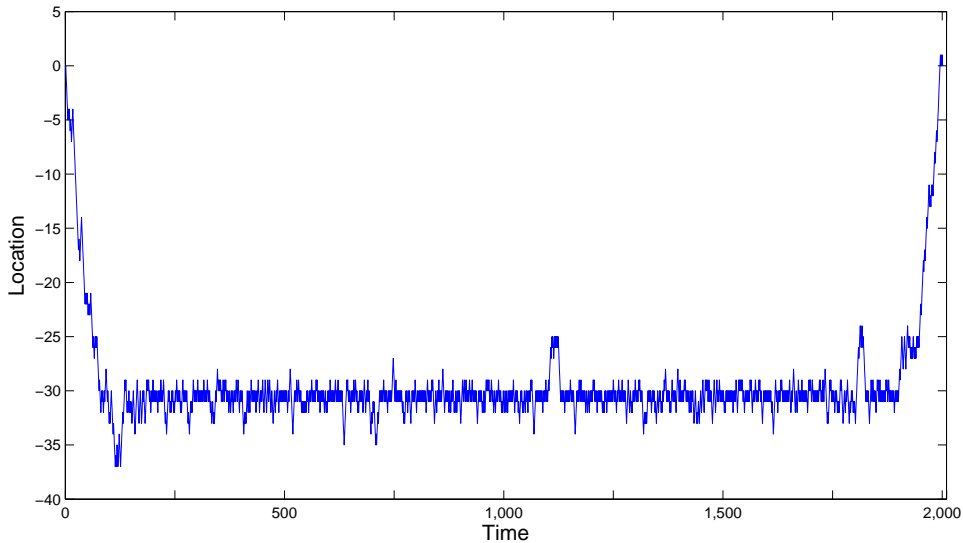}
\caption{A simulation of a RWRE bridge of 2000 steps. 
The distribution on the environment used was such that 
$P(\w_0 = 3/4) = .9$ and $P(\w_0 = 1/4) = .1$. 
Using the notation from Section \ref{nestlingsection}, this distribution on environments has the parameter $\kappa = 2$, and thus
Theorem \ref{nestlingcase} implies that the 
maximal displacement of a bridge of length $2n$ in this case should be roughly of the order $n^{2/3}$ when $n$ is large.
}
\end{figure}

The paper is organized as follows. In Sections \ref{nestlingsection}, \ref{margnestlingsection} and \ref{nonnestlingsection} respectively, the nestling, marginally nestling and non-nestling cases are treated. In Section \ref{openproblemsection} we state a conjecture on an improved lower bound for the maximal displacement in the marginally nestling and non-nestling cases.

We conclude this section with some notation that will be used throughout the rest of the paper. 
We will use $\theta$ to denote the standard shift operation on doubly infinite sequences. That is, $(\theta^x \w)_y = \w_{x+y}$. 
Also, we will use $T_k := \inf\{ n\geq 0: X_n = k \}$ to be the hitting time of the site $k$ by the random walk.  The law of a simple symmetric random walk (i.e., $\w_x \equiv 1/2$) will be denoted by $P_{1/2}$ with corresponding expectations denoted $E_{1/2}$. 

{\bf Acknowledgement} We thank the organizers of the program ``Discrete probability'' (Spring 2009) for inviting us to beautiful Institut Mittag-Leffler where this paper was initiated. We also thank Itai Benjamini for originally posing this question to us and Ofer Zeitouni for helpful discussions. 

\section{Case I: Nestling environment}\label{nestlingsection}
 
Throughout this section we will make the following assumption on environments in addition to Assumptions \ref{UEIIDasm} and \ref{Tasm}. 
\begin{asm}[Nestling] \label{Nasm}
 $P(\w_0 < 1/2) > 0$ and $P(\w_0 > 1/2) > 0$. 
\end{asm}
If Assumptions \ref{UEIIDasm}, \ref{Tasm}, and \ref{Nasm} are satisfied, then we can define a parameter 
$\k = \k(P)$ by 
\be\label{kdef}
 \k = \k(P) \text{ is the unique positive solution of } E_P \rho_0^\k = 1. 
\ee
The parameter $\kappa$ first appeared in \cite{kksStable} in relation to the scaling exponent for the averaged limiting distributions of transient one-dimensional RWRE.
Moreover, the law of large numbers for transient RWRE derived by Solomon in \cite{sRWRE} may be re-stated in terms of the parameter $\kappa$. 
\be\label{LLN}
\lim_{n\ra\infty} \frac{X_n}{n} = 
\begin{cases}
 \frac{ 1- E_P \rho_0}{1+ E_P \rho_0} =: \vp  > 0 & \k > 1 \\
 0 & \k \leq 1.
\end{cases}
\ee
If $\k < 1$, then the typical displacement of $X_n$ is sub-linear and of the order $n^\k$ (see \cite{kksStable} or \cite{ESZ}). 

The main result of this section is that the the maximal displacement of bridges for RWRE under the above assumptions is approximately of the order $n^{\k/(\k+1)}$. 
\begin{thm}\label{nestlingcase}
  Let Assumptions \ref{UEIIDasm}, \ref{Tasm}, and \ref{Nasm} hold. Then, for $P-a.e.$ environment $\w$,
\[
 \lim_{n\ra\infty} P_\w\left( \max_{k\leq 2 n} |X_k| \geq n^\b \; \Bigl| \; X_{2n} = 0 \right) 
= \begin{cases}
   1 & \beta < \frac{\k}{\k+1} \\
   0 & \beta > \frac{\k}{\k+1},
  \end{cases}
\]
where $\k>0$ is defined by \eqref{kdef}.
\end{thm}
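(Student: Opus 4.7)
Factor the conditional probability as
$$P_\w\!\left(\max_{k\leq 2n}|X_k|\geq n^\b \,\Big|\, X_{2n}=0\right) = \frac{P_\w(\max_{k\leq 2n}|X_k|\geq n^\b,\,X_{2n}=0)}{P_\w(X_{2n}=0)}.$$
The key auxiliary input is the advertised Lemma~\ref{Xneq0}: for $P$-a.e.\ $\w$,
$$P_\w(X_{2n}=0) = \exp\{-n^{\k/(\k+1)+o(1)}\}.$$
Taking this as given, it remains to match this rate against the numerator when $\b>\k/(\k+1)$, and against the ``confined'' probability $P_\w(\max_{k\leq 2n}|X_k|<n^\b,\,X_{2n}=0)$ when $\b<\k/(\k+1)$. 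A pleasant algebraic feature is that the threshold $\b=\k/(\k+1)$ is precisely where $1-\b/\k=\k/(\k+1)$, i.e.\ where the two competing decay rates below cross.

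\textbf{Case $\b>\k/(\k+1)$.} By the strong Markov property at $T_{\pm\lceil n^\b\rceil}$, the numerator is bounded by $P_\w^{\lceil n^\b\rceil}(T_0<\infty)+P_\w^{-\lceil n^\b\rceil}(T_0<\infty)$. A standard gambler's-ruin calculation gives
$$P_\w^M(T_0<\infty)=\prod_{i=1}^M\rho_i\cdot\frac{V_\infty(\theta^M\w)}{V_\infty(\w)},\qquad V_\infty(\w):=\sum_{j\geq 0}\prod_{i=1}^j\rho_i.$$
Under Assumption~\ref{Tasm}, $V_\infty<\infty$ $P$-a.s.; the SLLN applied to $\log\rho_0$ (negative mean) yields $\prod_{i=1}^M\rho_i\leq e^{-cM}$ for a.e.\ $\w$ with $c:=|E_P\log\rho_0|>0$, while Kesten's polynomial tail $P(V_\infty>t)\sim C_*t^{-\k}$ combined with Borel--Cantelli keeps $V_\infty(\theta^M\w)$ subpolynomial along $M=\lceil n^\b\rceil$. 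A parallel first-step analysis on the leftward escape factors $P_\w^{-j}(T_{-j-1}<\infty)$---each of whose logarithm has strictly negative $P$-mean by transience to $+\infty$---handles the $-n^\b$ term. Together the numerator is $O(e^{-c'n^\b})$, which is negligible against the denominator $\exp\{-n^{\k/(\k+1)+o(1)}\}$ precisely when $\b>\k/(\k+1)$.

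\textbf{Case $\b<\k/(\k+1)$.} It suffices to prove
$$P_\w\!\left(\max_{k\leq 2n}|X_k|<n^\b,\,X_{2n}=0\right) \leq \exp\{-n^{1-\b/\k-o(1)}\},$$
which dominates the denominator since $\b<\k/(\k+1)$ forces $1-\b/\k>\k/(\k+1)$. The heuristic is that inside $I_n=(-n^\b,n^\b)$ the deepest trap has depth $H_n=(\b/\k)\log n+o(\log n)$ (by extreme-value asymptotics for partial products of $\rho_i$, via Kesten's tail), so the quenched dwell time in any single trap is at most $n^{\b/\k+o(1)}$. Consequently, to fill up time $2n$ the walk must make $\sim n^{1-\b/\k}$ excursions out of traps; each excursion offers an $\Omega(1)$ chance of riding the drift to $\pm n^\b$, so the probability that none does so is at most $\exp\{-n^{1-\b/\k-o(1)}\}$. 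I would make this precise either via the explicit birth--death formula for $E_\w^0[\tau_{I_n}]$ combined with Perron--Frobenius applied to the substochastic restriction of the quenched kernel to $I_n$, or via the quenched moderate-deviation estimates from \cite{fgpMD}.

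\textbf{Main obstacle.} The delicate ingredient is the quenched survival bound for $\tau_{I_n}$: both the trap-depth estimate on $H_n$ and its conversion into the exponential tail must be quantitative enough that the $o(1)$ corrections do not erase the positive gap $(1-\b/\k)-\k/(\k+1)$, which shrinks to zero as $\b\uparrow\k/(\k+1)$. Moreover the $o(1)$ here must combine favourably with the $o(1)$ in Lemma~\ref{Xneq0}. Producing these matched quenched estimates---by pushing the Kesten-renewal tools for $V_\infty$ and the moderate-deviation machinery of \cite{fgpMD,ppQSubexp} to the relevant scale---is where the bulk of the technical work resides; once they are in place, the two cases above combine to give the dichotomy claimed in the theorem.
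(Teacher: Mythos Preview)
Your overall strategy matches the paper's exactly: compare the numerator (for $\b>\k/(\k+1)$) or the confined probability (for $\b<\k/(\k+1)$) against the denominator from Lemma~\ref{Xneq0}, exploiting the crossover $1-\b/\k = \k/(\k+1)$ at $\b=\k/(\k+1)$.

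A few remarks. In the case $\b>\k/(\k+1)$ your displayed decomposition has a slip: the second term should be $P_\w(T_{-\lceil n^\b\rceil}<\infty)$, not $P_\w^{-\lceil n^\b\rceil}(T_0<\infty)$ (the latter is close to $1$ by transience to $+\infty$). Your subsequent analysis via the leftward escape factors $P_\w^{-j}(T_{-j-1}<\infty)$ in fact treats the correct quantity, so this is only notational. The paper writes the same decomposition as $P_\w(T_{-\lceil n^\b\rceil}<2n) + P_{\theta^{\lceil n^\b\rceil}\w}(T_{-\lceil n^\b\rceil}<2n)$ and handles both terms by quoting the backtracking moderate-deviation result from \cite{fgpMD} (equation~\eqref{backtracking}), using a Chebyshev plus Borel--Cantelli step for the shifted environment. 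Your alternative via the explicit gambler's-ruin formula, the SLLN for $\sum\log\rho_i$, and Kesten's tail for $V_\infty$ is a valid and slightly more self-contained route.

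For $\b<\k/(\k+1)$ you are overcomplicating. The paper simply drops the condition $X_{2n}=0$ and invokes Lemma~\ref{confinement}, which gives $P_\w(\max_{k\leq 2n}|X_k|<n^\b)=\exp\{-n^{1-\b/\k+o(1)}\}$ and is itself proved via exactly the quenched moderate-deviation estimate from \cite{fgpMD} that you list as one option. There is therefore no remaining ``main obstacle'': the $o(1)$ terms in Lemmas~\ref{confinement} and~\ref{Xneq0} are each eventually smaller than any fixed positive gap $(1-\b/\k)-\k/(\k+1)$, and that is all the comparison requires. Your trap-depth/excursion-counting heuristic is the correct intuition behind Lemma~\ref{confinement}, but you need not re-derive it for this theorem.
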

\begin{rem}
The cases $\frac{\k}{\k+1} > \frac{1}{2}$, $\frac{\k}{\k+1} =\frac{1}{2}$ and $\frac{\k}{\k+1} < \frac{1}{2}$ are possible. Note that, due to (\ref{LLN}), $\frac{\k}{\k+1} > \frac{1}{2}$ if and only if $(X_n)$ has a.s. positive linear speed.
\end{rem}
An important ingredient in the proof of Theorem \ref{nestlingcase} is the following lemma.
\begin{lem} \label{confinement}
 Let Assumptions \ref{UEIIDasm}, \ref{Tasm}, and \ref{Nasm} hold. If $\beta \in (0,1\wedge  \kappa)$, then for $P-a.e.$ environment $\w$,
\[
 P_\w\left( \max_{k\leq n} |X_k| < n^\b \right) = \exp\left\{ -n^{1-\frac{\b}{\k} + o(1)} \right\}.
\]
\end{lem}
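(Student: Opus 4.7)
Let $L := n^\b$. Since $\max_{k\leq n}|X_k| < L$ implies $T_L > n$, the upper bound reduces to controlling $P_\w(T_L > n)$; the lower bound is obtained by an explicit trap strategy. Both parts use the decomposition $T_L = \sum_{j=0}^{L-1}\tau_j$, $\tau_j := T_{j+1}-T_j$, which is independent under $P_\w$ by strong Markov, and the quenched means
\[
u_j := E_\w[\tau_j] = \sum_{i\leq j}\w_i^{-1}\prod_{k=i+1}^j\rho_k.
\]
The dominant term in $u_j$ corresponds to the deepest ``backward valley'' ending at $j$; more precisely, $\log u_j = h_j + O(\log L)$ where $h_j := V(j+1) - \min_{i\leq j+1}V(i)$ and $V(x) := \sum_{i<x}\log\rho_i$. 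Cram\'er's theorem for the \iid sequence $\log\rho_i$, using the defining identity $E_P\rho_0^\k = 1$, gives $P(h_j > h) \asymp e^{-\k h}$, and a Borel-Cantelli argument along a geometric sequence of scales yields
\[
\max_{j\leq L}u_j = L^{1/\k + o(1)} \qquad P\text{-a.s.}
\]

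\textit{Lower bound.} We may assume $\b < \k$ (otherwise the exponent $1-\b/\k$ is non-positive and the claim is immediate from $E_\w[T_L] \gg n$, forcing $P_\w(T_L > n) \to 1$). Apply the LDP to the subinterval $[0, L/2]$ to find $j^* \in [0, L/2]$ with $u_{j^*} \geq L^{1/\k - \e}$, $P$-a.s.\ for all large $n$. Standard spectral-gap estimates for the birth-death chain in a deep valley show that under $P_\w^{j^*}$ the time $\tau_{j^*}$ is asymptotically exponential with mean $u_{j^*}$, and that the associated trajectory stays in a window of width $O(\log L) \ll L$ around $j^*$. Since $E_\w[T_{j^*}] \leq E_\w[T_L] = L^{\max(1,1/\k)+o(1)} \ll n$ for $\b<\k$, the walk reaches $j^*$ by time $n/2$ with probability $\geq 1/2$ (Markov); conditionally, a subsequent excursion of length $n/2$ at $j^*$ keeps the walk in $(-L, L)$ with probability $\geq \exp\{-Cn/u_{j^*}\}$. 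Paths reaching $-L$ en route contribute only $e^{-cL}$ (gambler's ruin), absorbed into $o(1)$. Thus
\[
P_\w\Bigl(\max_{k\leq n}|X_k| < L\Bigr) \geq c\exp\{-Cn/u_{j^*}\} \geq \exp\{-n^{1-\b/\k + o(1)}\}.
\]

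\textit{Upper bound.} Apply exponential Chebyshev at $\lambda := L^{-1/\k - \e}$, using independence of the $\tau_j$:
\[
P_\w(T_L > n) \leq e^{-\lambda n}\prod_{j<L}E_\w[e^{\lambda\tau_j}].
\]
By the LDP, $u_j \leq L^{1/\k + \e/2}$ for all $j$ ($P$-a.s.), so $\lambda u_j \leq L^{-\e/2} \to 0$. Since the MGF of $\tau_j$ has pole at $\asymp 1/u_j$, each factor is bounded by $1 + C\lambda u_j$, whence $\sum_j \log E_\w[e^{\lambda\tau_j}] \leq C\lambda E_\w[T_L]$. Using $E_\w[T_L] \leq L^{\max(1,1/\k)+o(1)}$ and $\b < 1$, a direct computation gives $\lambda E_\w[T_L] = o(\lambda n)$, so $P_\w(T_L > n) \leq \exp\{-n^{1-\b/\k - o(1)}\}$.

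\textit{Main obstacle.} The most delicate point is the sharp quenched exponential tail of $\tau_{j^*}$: the leading constant must match $1/u_{j^*}$, which requires the trap's mixing time to be negligible compared to $u_{j^*}$. Standard spectral arguments yield this for sufficiently deep traps, and essentially the same estimate appears in \cite{ppQSubexp}; the matching quenched moderate deviations for $T_L$ underlying the upper bound are close in spirit to those in \cite{fgpMD}, whose techniques should carry over with only minor modification.
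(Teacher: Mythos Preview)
Your approach differs substantially from the paper's. The paper does not analyze the crossing times $\tau_j$ at all; instead it reduces both bounds to Theorem~1.2 of \cite{fgpMD}, which already gives $P_{\w^+}(T_{\lceil n^\b\rceil}>n)=\exp\{-n^{1-\b/\k+o(1)}\}$ for the environment reflected at the origin. For the lower bound the paper splits the confinement event into left and right excursions from $0$, handles the left excursions by a gambler's-ruin estimate (which tends to~$1$), and cites \cite{fgpMD} for the right excursions. For the upper bound it places a reflection at $-\lceil n^\b\rceil$, observes that confinement in $(-n^\b,n^\b)$ is dominated by $\{T_{2\lceil n^\b\rceil}>n\}$ for the reflected walk started at $-\lceil n^\b\rceil$, and applies a shifted version of the same estimate via Borel--Cantelli. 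The whole argument is about a dozen lines.

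Your proposal, by contrast, is effectively a sketch of how one would \emph{reprove} the relevant moderate-deviation estimate from scratch. The architecture is reasonable, but several steps you label ``standard'' are precisely the non-trivial content being sidestepped. In the upper bound, the inequality $E_\w[e^{\lambda\tau_j}]\le 1+C\lambda u_j$ with $C$ uniform in $j$ and $\w$ does \emph{not} follow merely from the location of the MGF pole; it requires control of the full moment sequence of $\tau_j/u_j$, which is delicate when several traps of comparable depth lie to the left of $j$. Your a.s.\ bound $\max_{j<L}u_j\le L^{1/\k+\e/2}$ must also account for the environment on $(-\infty,0)$, since $u_j$ depends on all sites $i\le j$. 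In the lower bound, the assertion that the trajectory stays in an $O(\log L)$ window around $j^*$ \emph{conditional on} $\tau_{j^*}>n$ is exactly the kind of trap-localization estimate that needs a real argument (e.g.\ a reflection at the left edge of the valley plus a spectral comparison). None of these claims is false, but together they are most of the work; the paper avoids all of it by invoking \cite{fgpMD} as a black box. Your ``Main obstacle'' paragraph correctly identifies where the difficulty lies, but what remains is not a minor modification --- it is the substance of the cited reference.
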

\begin{proof}
For any environment $\w$, let $\w^-$ and $\w^+$ be the modified environment by adding a reflection to the left or right, respectively, at the origin. That is,
\be\label{defrefl}
 (\w^-)_x := \begin{cases} \w_x & x \neq 0 \\ 0 & x = 0, \end{cases} \quad\text{and}\quad 
(\w^+)_x := \begin{cases} \w_x & x \neq 0 \\ 1 & x = 0. \end{cases}
\ee
The random walk starting at the origin stays in the interval $(-n^\b,n^\b)$ if both the left excursions and the right excursions from the origin take at least $n$ steps to leave the interval. Therefore,
\begin{equation}\label{confinelb}
P_\w\left( \max_{k\leq n} |X_k| < n^{\b} \right) 
\geq P_{\w^-} \left( T_{-\cl{n^\b}} > n \right) P_{\w^+} \left( T_{\cl{n^\b}} > n \right).
\end{equation}
Explicit formulas for hitting probabilities (see equation (2.1.4) in \cite{zRWRE}) imply that
\[
 P_{\w^-} \left( T_{-\cl{n^\b}} > n \right) 
\geq P_\w^{-1}( T_{-\cl{n^\b}} > T_0 )^{n} 
\geq \left( 1 - \prod_{-\cl{n^\b} < i < 0} \rho_i \right)^{n}. 
\]
The law of large numbers implies that $\prod_{-x<i <0} \rho_i = \exp\{ x( E_P(\log \rho_0) + o(1)) \}$ as $x \ra \infty$, $P-a.s.$ Since $E_P \log \rho_0 < 0$, this implies that the first probability on the right hand side of \eqref{confinelb} tends to $1$ as $n\ra\infty$, $P-a.s.$ In Theorem 1.2 in \cite{fgpMD}, it was shown that for any $\b<\k$
\begin{equation}\label{slowdownreflection}
 P_{\w^+} \left( T_{\cl{n^\b}} > n \right) = \exp\left\{ -n^{ 1- \frac{\b}{\k} + o(1) } \right\}, \quad P-a.s.
\end{equation}
Recalling \eqref{confinelb}, this completes the proof of the lower bound. 

To prove the corresponding upper bound, note that $P_\w\left( \max_{k\leq n} |X_k| < n^\b \right)$ depends only on the $\w_x$ with $|x|< n^{\b}$. Therefore, the probability is unchanged by modifying the environment so that $\w_{-\cl{n^\b}} = 1$. By the strong Markov property, the probability of staying confined to the interval $(-n^\b, n^\b)$ when starting at the origin in the original environment is less than the probability of taking more than $n$ steps to reach $\cl{n^\b}$ when starting at $-\cl{n^\b}$ in the modified environment. That is,
\begin{equation}\label{confineub}
 P_\w\left( \max_{k\leq n} |X_k| < n^\b \right) \leq P_{( \theta^{-\cl{n^\b}} \w)^+}\!\!\left( T_{2\cl{n^\b}} > n \right). 
\end{equation}
At this point, we would like to again apply \eqref{slowdownreflection} to the probability on the right in \eqref{confineub}. However, the presence of the shifted environment in the quenched probability doesn't allow for a direct application. We claim that the proof of \eqref{slowdownreflection} in \cite{fgpMD} may be easily modified to obtain that for any fixed sequence $x_n$, 
\begin{equation} \label{sequenceslowdown}
 P_{( \theta^{x_n} \w)^+}\left( T_{\cl{n^\b}} > n \right) = \exp\left\{ -n^{ 1- \frac{\b}{\k} + o(1) } \right\}, \quad P-a.s. 
\end{equation}
Indeed, in \cite{fgpMD}, it was shown that there were collections of typical environments $\Omega_n \subset \Omega$ such that $\sum_n P(\Omega_n^c) < \infty$ so that the Borel-Cantelli Lemma implied that $\w \in \Omega_n$ for all $n$ large enough, $P-a.s.$ Then, 
upper and lower bounds were developed for the probabilities $P_{\w^+} \left( T_{\cl{n^\b}} > n \right)$ which are uniform over $\w\in\Omega_n$. These upper and lower bounds and the fact that $\w\in\Omega_n$ for all $n$ large enough imply \eqref{slowdownreflection}. 
For any fixed sequence $x_n$ the shift invariance of $P$ implies that $P(\w \in \Omega_n) = P(\theta^{x_n}\w \in \Omega_n)$, and thus the Borel-Cantelli Lemma implies that $\theta^{x_n} \w \in \Omega_n$ for all $n$ large enough, $P-a.s.$  Therefore, the statement \eqref{sequenceslowdown} follows from the uniform upper and lower bounds for environments in $\Omega_n$. Applying \eqref{sequenceslowdown} to \eqref{confineub} with the sequence $x_n = - \cl{n^\b}$ gives the needed upper bound. 
\end{proof}

As a first step in computing the magnitude of displacement of bridges, we first calculate the asymptotic probability of being at the origin at time $2n$. 
\begin{lem} \label{Xneq0}
Let Assumptions \ref{UEIIDasm}, \ref{Tasm}, and \ref{Nasm} hold. Then, for $P-a.e.$ environment $\w$,
\[
 P_\w(X_{2n} = 0) = \exp\left\{-n^{\frac{\k}{\k + 1} + o(1)}\right\}, 
\]
where $\k>0$ is defined by \eqref{kdef}.
\end{lem}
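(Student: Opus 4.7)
The plan is to derive matching sub-exponential upper and lower bounds on $P_\w(X_{2n}=0)$ from Lemma \ref{confinement}, optimizing a window size $L=\cl{n^\b}$ to balance two competing mechanisms: either the walk stays confined to $(-L,L)$ for all of $2n$ steps, or it ventures outside and must return to $0$.

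For the \textbf{upper bound}, for any $\b\in(0,1)$ and $L=\cl{n^\b}$, the strong Markov property at $T_{\pm L}$ gives
\[
 P_\w(X_{2n}=0) \leq P_\w\!\left(\max_{k\leq 2n}|X_k|<L\right) + P_\w^{L}(T_0<\infty) + P_\w^{-L}(T_0<\infty).
\]
Lemma \ref{confinement} bounds the first summand by $\exp\{-n^{1-\b/\k+o(1)}\}$. The remaining return probabilities admit the classical ratio-of-series formulas (see equation (2.1.4) in \cite{zRWRE}); Birkhoff's theorem applied to $\log\rho_i$ (with strictly negative $P$-mean by Assumption \ref{Tasm}), together with a Borel--Cantelli estimate exploiting the Kesten tails of $\sum_{i\geq 0}\prod_{j=1}^i\rho_j$ (finite moments of every order $<\k$), yields $P_\w^{\pm L}(T_0<\infty)\leq \exp\{-cL(1+o(1))\}$, $P$-a.s., for $c=-E_P\log\rho_0>0$. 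Choosing $\b=\k/(\k+1)$ balances the two exponents at $n^{\k/(\k+1)+o(1)}$.

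For the \textbf{lower bound}, I would take $\b$ slightly above $\k/(\k+1)$ and work with the sub-stochastic transition matrix $Q_L$ obtained by restricting the quenched kernel to $(-L,L)\cap\Z$. Perron--Frobenius produces a simple top eigenvalue $\l_L<1$ and strictly positive left/right eigenvectors $\mu_L, h_L$. Bounding $P_\w(X_{2n}=0)\geq (Q_L^{2n})_{0,0}$ and applying the spectral decomposition gives
\[
 (Q_L^{2n})_{0,0} \sim \mu_L(0)\sum_y (Q_L^{2n})_{0,y} = \mu_L(0)\,P_\w(T_L\wedge T_{-L}>2n),
\]
so it suffices to exhibit $\mu_L(0)\geq \exp\{-n^{o(1)}\}$ and then invoke Lemma \ref{confinement} for the confinement factor. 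The reversibility of the RWRE (with respect to the measure $\pi$ satisfying $\pi_{x+1}/\pi_x=\w_x/(1-\w_{x+1})$) identifies $\mu_L\propto \pi\cdot h_L$; Borel--Cantelli applied to the partial sums $\sum_{i=1}^x\log\rho_i$ implies that for $P$-a.e.\ $\w$ the potential $\log\pi_x$ oscillates by only $O(\log L)$ over $|x|\leq L$, yielding the needed polylog lower bound on $\mu_L(0)$.

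The \textbf{main obstacle} is making this spectral argument quantitative, uniformly in the window size $L=\cl{n^\b}$ depending on $n$: the error terms in the spectral decomposition are controlled by the spectral gap of $Q_L$, which for RWRE is governed by the deepest potential valley inside $(-L,L)$. That depth is $O(\log L)$, $P$-a.s., by Borel--Cantelli, so the relaxation time is only polynomial in $L$ and hence well below $2n$ for $\b<1$. Turning this heuristic into a uniform, non-asymptotic Perron--Frobenius estimate and combining it with the confinement bound of Lemma \ref{confinement} is the principal technical step.
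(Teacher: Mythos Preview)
Your upper bound decomposition contains a small but genuine error: $P_\w^{-L}(T_0<\infty)$ is close to $1$, not exponentially small, since from $-L$ the walk drifts to the right and reaches $0$ almost surely. The correct bound for the exit-via-$(-L)$ contribution is $P_\w(T_{-L}<\infty)$, the probability of ever reaching $-L$ from the origin, and \emph{that} quantity does decay like $e^{-cL}$. With this correction your upper bound argument goes through. The paper takes an even shorter route: it simply cites the moderate deviation estimate $P_\w(X_{2n}<(2n)^\nu)=\exp\{-n^{\k/(\k+1)+o(1)}\}$ from \cite{fgpMD} for any $\nu\le\k/(\k+1)$, which bounds $P_\w(X_{2n}=0)$ from above in one line.

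The lower bound is where your proposal diverges substantially from the paper, and where there is a real gap. Your claim that $\log\pi_x$ ``oscillates by only $O(\log L)$'' over $|x|\le L$ is false: from $\pi_{x+1}/\pi_x=\w_x/(1-\w_{x+1})$ and Birkhoff's theorem one gets $\log\pi_x-\log\pi_0\sim x\,(-E_P\log\rho_0)$, a strictly positive \emph{linear} drift. Hence $\pi$ varies by a factor $e^{\Theta(L)}$ across $[-L,L]$, and the polylog lower bound on $\mu_L(0)$ would require a nontrivial analysis of the right Perron eigenfunction $h_L$ to cancel this, which you have not supplied. The paper avoids all spectral machinery with a one-line argument exploiting uniform ellipticity: force the walk to stay in $[-n^{\k/(\k+1)},n^{\k/(\k+1)}]$ for the first $2n-\lfloor n^{\k/(\k+1)}\rfloor$ steps (cost $\exp\{-n^{\k/(\k+1)+o(1)}\}$ by Lemma~\ref{confinement}), then prescribe any deterministic nearest-neighbour path of length at most $2\lfloor n^{\k/(\k+1)}\rfloor$ from the current position back to $0$ (cost at least $c^{2\lfloor n^{\k/(\k+1)}\rfloor}$ by Assumption~\ref{UEIIDasm}). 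Multiplying gives the desired lower bound immediately, with no need for Perron--Frobenius, spectral gaps, or quasi-stationary distributions.
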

\begin{proof}
This lemma follows easily from Lemma \ref{confinement} and the moderate deviation asymptotics derived in \cite{fgpMD}. If $\nu \in (0,1 \wedge \kappa)$, then Theorem 1.2 in \cite{fgpMD} implies that
\[
 \lim_{n\ra\infty} \frac{ \ln \left( -\ln P_\w( T_{n^\nu} > n ) \right) }{\ln n}  = \lim_{n\ra\infty} \frac{ \ln \left( -\ln P_\w( X_n < n^\nu ) \right) }{\ln n} = \left( 1- \frac{\nu}{\k} \right) \wedge \frac{\k}{\k +1}, \qquad P-a.s.
\]
Note that $1-\frac{\nu}{\k} < \frac{\k}{\k+1}$ if and only if $\nu > \frac{\k}{\k+1}$. Therefore, for $\nu \leq \frac{\k}{\k + 1}$,
\[
 P_\w(X_{2n} = 0) \leq P_\w(X_{2n} < (2n)^{\nu}) = \exp\left\{-n^{\frac{\k}{\k + 1} + o(1)}\right\}. 
\]
To get a corresponding lower bound, we will exhibit a strategy for obtaining $X_{2n} = 0$. 
Force the random walk to stay in the interval $[-n^{\k/(\k+1)}, n^{\k/(\k+1)}]$ for the first $2n-\fl{n^{\k/(\k+1)}}$ steps, and then choose a deterministic path from the random walks current location to force the random walk to be at the origin at time $2n$. Uniform ellipticity (Assumption \ref{UEIIDasm}) and Lemma \ref{confinement} imply
\[
 P_\w(X_{2n} = 0) \geq P_\w\left( \max_{k\leq 2n} |X_k| \leq n^{\k/(\k+1)} \right) c^{2 \fl{n^{\k/(\k+1)}}} = \exp\left\{ -n^{\frac{\k}{\k+1} + o(1)}  \right\}. 
\]
\end{proof}

We are now ready to give the proof of the main result in this section. 
\begin{proof}[Proof of Theorem \ref{nestlingcase}.]
We first give a lower bound for $P_\w\left( \max_{k\leq n} |X_k| \geq n^\b \;|\; X_{2n} = 0 \right)$. 
\begin{align*}
 P_\w\left( \max_{k\leq 2n} |X_k| \geq n^\b | X_{2n} = 0 \right) 
&= 1 - \frac{ P_\w\left( \max_{k\leq 2n} |X_k| < n^\b ,\; X_{2n} = 0 \right) }{ P_\w( X_{2n} = 0 ) } \\
&\geq 1 - \frac{ P_\w\left( \max_{k\leq 2 n} |X_k| < n^\b \right) }{ P_\w( X_{2n} = 0 ) } \\
&= 1- \frac{ \exp\left\{ -n^{1-\frac{\b}{\k} + o(1)} \right\} }{ \exp\left\{ -n^{\frac{\k}{\k+1} + o(1)} \right\} },
\end{align*}
where the last inequality is from Lemmas \ref{confinement} and \ref{Xneq0}. Since $1-\frac{\b}{\k} > \frac{\k}{\k+1}$ when $\b< \frac{\k}{\k+1}$, we have thus proved Theorem \ref{nestlingcase} in the case $\b< \frac{\k}{\k+1}$. 

Next we turn to the case $\b>\frac{\k}{\k+1}$. First of all, note that 
\[
 P_\w\left( \max_{k\leq 2n} |X_k| \geq n^\b \;|\; X_{2n} = 0 \right) = \frac{ P_\w\left( \max_{k\leq 2n} |X_k| \geq n^\b \;,\; X_{2n} = 0 \right) }{ \exp\left\{ -n^{\frac{\k}{\k+1} + o(1)} \right\} }.
\]
Therefore, it is enough to show that, $P-a.s.$, there exists a $\b'\in (\frac{\k}{\k+1}, \b)$ such that for all $n$ sufficiently large,
\be\label{nestub}
 P_\w\left( \max_{k\leq 2n} |X_k| \geq n^\b \;,\; X_{2n} = 0 \right) \leq e^{-n^{\b'}} .
\ee
To this end, note that the event $\left\{ \max_{k\leq 2n} |X_k| \geq n^\b \;,\; X_{2n} = 0 \right\} $ implies that either the random walk reaches $-\fl{n^\b}$ in less than $2n$ steps, or after first reaching $\fl{n^\b}$ the random walk returns to the origin in less than $2n$ steps. Thus, the strong Markov property implies that
\be \label{twobt}
 P_\w\left( \max_{k\leq 2n} |X_k| \geq n^\b \;,\; X_{2n} = 0 \right) \leq P_\w\left( T_{-\cl{n^\b}} < 2 n\right) + P_{\theta^{\cl{n^\b}}\w}\left( T_{-\cl{n^\b}} < 2n \right). 
\ee
It was shown in Theorem 1.4 in \cite{fgpMD} that for any $\b\in(0,1)$,
\be\label{backtracking}
 \lim_{n\ra\infty} \frac{ \ln\left( - \ln \P( T_{-\cl{n^\b}} < 2n ) \right)}{\ln n} = \lim_{n\ra\infty} \frac{ \ln\left( - \ln P_\w( T_{-\cl{n^\b}} < 2n ) \right)}{\ln n} = \b, \quad P-a.s.
\ee
This implies that, $P-a.s.$, the first term on the right hand side of \eqref{twobt} is less than $e^{-n^{\b'}}$ for any $\b'<\b$ and all $n$ large enough. To show that the same is true of the second term on the right hand side of \eqref{twobt}, note that Chebychev's inequality and the shift invariance of $P$ imply that 
\[
 P\left( P_{\theta^{\cl{n^\b}}\w}\left( T_{-\cl{n^\b}} < 2n \right) > e^{-n^{\b'}} \right) \leq e^{n^{\b'}} \P \left( T_{-\cl{n^\b}} < 2n \right) = e^{n^{\b'}} e^{-n^{\b+o(1)}}. 
\]
Since $\b'<\b$, this last term is summable, and thus the Borel-Cantelli Lemma implies that, $P-a.s.$, $P_{\theta^{\cl{n^\b}}\w}\left( T_{-\cl{n^\b}} < 2 n \right) \leq e^{-n^{\b'}}$ for all $n$ large enough. Choosing $\b'\in(\frac{\k}{\k+1}, \b)$ and recalling \eqref{twobt}, we have that, $P-a.s.$, \eqref{nestub} holds for all $n$ large enough. This completes the proof of Theorem \ref{nestlingcase} in the case $\b> \frac{\k}{\k+1}$. 
\end{proof}

\section{Case II: Marginally nestling environment}\label{margnestlingsection}
In this section we will assume the following condition on environments. 
\begin{asm} \label{MNasm}
 $P(\w_0 \geq 1/2 ) = 1$ and $\a=P(\w_0 = 1/2) \in (0,1)$. 
\end{asm}
Note that Assumption \ref{MNasm} implies Assumption \ref{Tasm}, and so in this section we will only assume Assumptions \ref{UEIIDasm} and \ref{MNasm}. 
Our main result in this section is that the displacement of bridges in this case is greater than $n^{1-\e}$ and less than $n/(\ln n)^{2-\e}$ for any $\e>0$. 
\begin{thm}\label{margnestlingcase}
  Let Assumptions \ref{UEIIDasm} and \ref{MNasm} hold. Then
for any $\b < 2$,
\begin{equation} \label{mndispub}
 \lim_{n\ra\infty} P_\w\left( \max_{k\leq 2n} |X_k| \geq \frac{n}{(\ln n)^\b} \; \Bigl| \; X_{2n} = 0 \right) = 0, \quad P-a.s.,
\end{equation}
and for any $\gamma < 1$,
\begin{equation}  \label{mndisplb}
  \lim_{n\ra\infty} P_\w\left( \max_{k\leq 2n} |X_k| \geq n^\gamma \; \Bigl| \; X_{2n} = 0 \right) = 1, \quad P-a.s.
\end{equation}
\end{thm}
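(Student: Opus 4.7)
The plan is to follow the two-step pattern of Theorem \ref{nestlingcase}, replacing the polynomial rate $n^{\k/(\k+1)}$ by the sub-polynomial rate $n/(\ln n)^2$. Two auxiliary results, both to be established separately in Section \ref{margnestlingsection}, will be invoked: Lemma \ref{mnatzero}, giving $P_\w(X_{2n}=0) = \exp\{-Cn/(\ln n)^2 + o(n/(\ln n)^2)\}$ $P$-a.s.\ for an explicit $C>0$, and a marginally-nestling confinement estimate obtained, as in Lemma \ref{confinement}, by sandwiching between the reflected environments $\w^\pm$ and invoking the marginally-nestling quenched slowdown bound from \cite{fgpMD}:
\[ P_\w\left(\max_{k\leq n}|X_k|<R\right) = \exp\{-\tilde{C}n/(\ln R)^2(1+o(1))\}, \qquad P\text{-a.s.}, \]
for an explicit $\tilde{C}>0$ and for appropriate growing $R=R(n)$. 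Optimising the box size in the lower-bound construction of Lemma \ref{mnatzero} (confine to $[-R_*,R_*]$ with $R_*\sim n/(\ln n)^3$ for $2n-2R_*$ steps and then walk deterministically back to $0$ using Assumption \ref{UEIIDasm}) yields $C\leq 2\tilde{C}$, which is all that will be needed below.

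For the upper bound \eqref{mndispub} I reuse the strong-Markov decomposition at $T_{\pm R}$ from the case $\b>\k/(\k+1)$ of Theorem \ref{nestlingcase}: with $R:=\cl{n/(\ln n)^\b}$,
\[ P_\w\left(\max_{k\leq 2n}|X_k|\geq R,\, X_{2n}=0\right) \leq P_\w(T_{-R}<2n) + P_{\theta^R\w}(T_{-R}<2n). \]
The key new input is that in the marginally-nestling setting backtracking is exponentially rare, not merely sub-exponentially rare: writing $f_i(\w):=P_\w^i(T_{i-1}<\infty)$, transience to $+\infty$ together with Assumption \ref{UEIIDasm} forces $c_0:=-E_P\log f_0\in(0,\infty)$ (if $f_0=1$ $P$-a.s., stationarity and the strong Markov property would force $\liminf X_n=-\infty$, contradicting transience), and the ergodic theorem, applied to the stationary sequences $\{\log f_i\}_{i\leq 0}$ and $\{\log f_j\}_{j\geq 1}$, gives
\[ P_\w(T_{-R}<\infty)\leq e^{-c_0R/2}, \quad P_{\theta^R\w}(T_{-R}<\infty)\leq e^{-c_0R/2}, \quad P\text{-a.s.}, \]
for all $R$ large. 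With $R=\cl{n/(\ln n)^\b}$ and $\b<2$ we have $c_0R\sim c_0n/(\ln n)^\b\gg n/(\ln n)^2$, so the numerator is $o(P_\w(X_{2n}=0))$, and \eqref{mndispub} follows.

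For the lower bound \eqref{mndisplb}, the confinement estimate with $R=n^\gamma$ yields
\[ P_\w\left(\max_{k\leq 2n}|X_k|<n^\gamma,\, X_{2n}=0\right) \leq P_\w\left(\max_{k\leq 2n}|X_k|<n^\gamma\right) = \exp\{-2\tilde{C}n/(\gamma\ln n)^2(1+o(1))\}. \]
Dividing by the lower bound $P_\w(X_{2n}=0)\geq\exp\{-2\tilde{C}n/(\ln n)^2(1+o(1))\}$ (from the construction discussed above) gives ratio $\exp\{-2\tilde{C}(1/\gamma^2-1)n/(\ln n)^2(1+o(1))\}$, which tends to $0$ for any $\gamma<1$, establishing \eqref{mndisplb}.

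The main obstacle is establishing the two auxiliary inputs with adequate precision: the confinement estimate and the lower bound in Lemma \ref{mnatzero} both rest on a marginally-nestling analog of the quenched moderate-deviation result of \cite{fgpMD}, where the slowdown is driven by the longest consecutive run of $\w_i=1/2$ sites in $[-R,R]$ (of length $\sim\log_{1/\a}R$ and exit time $\sim(\log_{1/\a}R)^2$) rather than by a single deep trap as in the nestling case. Once these auxiliary results and the relation $C\leq 2\tilde{C}$ are in hand, the proof of Theorem \ref{margnestlingcase} reduces, as shown above, to a straightforward rate comparison mirroring Theorem \ref{nestlingcase}.
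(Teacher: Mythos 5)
Your reduction of Theorem \ref{margnestlingcase} to two auxiliary inputs --- a decay rate for $P_\w(X_{2n}=0)$ and a confinement estimate $P_\w(\max_{k\leq 2n}|X_k|\leq n^\gamma)\approx\exp\{-|\pi\ln\a|^2 n/(4\gamma^2(\ln n)^2)\}$ --- is exactly what the paper does, and your rate comparison in both directions is correct given those inputs. Your treatment of the backtracking term in the proof of \eqref{mndispub} differs from the paper's: you apply the ergodic theorem to the stationary sequence $\{\log f_i\}$ where $f_i = P_\w^i(T_{i-1}<\infty)$ (with $E_P\log f_0 < 0$ by transience and $>-\infty$ by ellipticity), whereas the paper uses the annealed bound $\P(T_{-x}<\infty)\leq (E_P\rho_0)^x/(1-E_P\rho_0)$ from \cite{dpzTE1D} combined with Chebychev and Borel--Cantelli. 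Both yield exponential-in-$R$ decay, which suffices since $R=n/(\ln n)^\b$ with $\b<2$ dominates $n/(\ln n)^2$; your version avoids importing the annealed estimate, which is a slight simplification.

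The genuine gap is where you predicted it: the confinement upper bound. You propose to obtain Proposition \ref{mnconfine} by sandwiching between reflected environments as in Lemma \ref{confinement} and ``invoking the marginally-nestling quenched slowdown bound from \cite{fgpMD}.'' That route does not go through. In this paper, \cite{fgpMD} is used only for the nestling regime, where the decay is polynomial $\exp\{-n^{1-\b/\k+o(1)}\}$ and governed by the parameter $\k$ from \eqref{kdef} --- which is not even defined under Assumption \ref{MNasm}, since $E_P\rho_0^\k<1$ for every $\k>0$ when $\rho_0\leq 1$ a.s. The marginally nestling quenched upper bound at scale $n/(\ln n)^2$ with the sharp constant $|\pi\ln\a|^2/(4\gamma^2)$ is proved in the paper from scratch (Proposition \ref{mnconfine}) by adapting the Pisztora--Povel block decomposition \cite{ppQSubexp}: one partitions space into fair/biased blocks, bounds the number of excursions between regions via a count of right-to-left crossings of biased blocks, and controls exit-time moment generating functions uniformly over regions whose size is bounded by a run-length estimate from \cite{dzLDTA}. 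None of this is a consequence of the \cite{fgpMD} moderate deviation theorems nor of the one-sided reflection trick from Lemma \ref{confinement}. So while the outer skeleton of your argument is right, the ``main obstacle'' you flag is indeed where the substance lies, and the specific reference you offer to fill it does not supply what is needed.

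A minor point on constants: you state $C\leq 2\tilde C$; in fact the paper's Lemma \ref{mnatzero} and Proposition \ref{mnconfine} give $C = |\pi\ln\a|^2/4$ and confinement exponent $|\pi\ln\a|^2/(4\gamma^2)$, so the equality $C=2\tilde C$ (in your normalization) holds exactly; your argument for \eqref{mndisplb} then relies on the factor $1/\gamma^2>1$ for strict improvement, which is correct.
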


\begin{rem}
Assumption \ref{MNasm} implies that $\w_{\min} = 1/2$, which is sometimes referred to as the marginally nestling condition. The added condition of $\a \in (0,1)$ was also assumed previously in the quenched and averaged analysis of certain large deviations \cite{gzQSubexp,dpzTE1D,ppzSubexp,ppQSubexp}. We suspect that if $\w_{\min} = 1/2$ but $\a=0$ then there exists a $\gamma < 2$ such that the displacement of bridges is bounded above by $n/(\ln n)^{\gamma - \e}$ for any $\e>0$ (c.f. Remark 1 on page 179 in \cite{gzQSubexp}). 
\end{rem}

The first step in the proof of Theorem \ref{margnestlingcase} is a computation of the precise subexponential rate of decay of the quenched probability to be at the origin. 

\begin{lem}\label{mnatzero}
Let Assumptions \ref{UEIIDasm} and \ref{MNasm} hold. Then,
\[
 \lim_{n\ra\infty} \frac{(\ln n)^2}{n} \ln P_\w(X_{2n} = 0) = - \frac{|\pi \log \a|^2}{4}. 
\]
\end{lem}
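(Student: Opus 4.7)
The plan is to establish matching upper and lower bounds for $\frac{(\ln n)^2}{n}\ln P_\omega(X_{2n}=0)$; the constant $|\pi\log\alpha|^2/4$ will arise from the principal eigenvalue of the continuous Laplacian on an interval with mixed Neumann--Dirichlet boundary conditions, applied with interval length equal to the length $L_n\sim\ln n/|\log\alpha|$ of the longest run of $\tfrac{1}{2}$-sites near the origin (the standard iid Bernoulli longest-run asymptotic). For the \emph{upper bound}, I would use the inclusion $\{X_{2n}=0\}\subseteq\{X_{2n}\leq 0\}$ together with the quenched slowdown tail estimate for the marginally nestling case established by Gantert--Zeitouni \cite{gzQSubexp}, which asserts that under Assumptions \ref{UEIIDasm} and \ref{MNasm},
\[
\lim_{n\to\infty}\frac{(\ln n)^2}{n}\ln P_\omega(X_n\leq 0)=-\frac{|\pi\log\alpha|^2}{4},\qquad P\text{-a.s.}
\]
Replacing $n$ by $2n$ then yields the matching $\limsup$.

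For the \emph{lower bound}, I would exhibit an explicit three-phase trapping strategy. Fix $\delta>0$ and set $L_n=\lfloor(1-\delta)\ln n/|\log\alpha|\rfloor$. By Borel--Cantelli applied to the iid environment, $P$-a.s.\ for all large $n$ there exists $a_n\in[0,n^{\delta/2}]$ with $\omega_{a_n}=\omega_{a_n+1}=\cdots=\omega_{a_n+L_n-1}=\tfrac{1}{2}$. The three phases are: (i) force the walker straight right from $0$ to $a_n$ in $a_n$ steps, at probability cost $\geq c^{a_n}$ by uniform ellipticity; (ii) let the walker perform simple random walk in and immediately around the trap for the next $2n-2a_n$ steps, ending at $a_n$; (iii) force the walker straight left from $a_n$ back to $0$, at cost $\geq c^{a_n}$. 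The travel cost $c^{2a_n}=\exp(-O(n^{\delta/2}))$ is negligible on the scale $n/(\ln n)^2$. The crux is phase (ii): inside the trap the walker is literally a simple random walk, while the site $a_n-1$ has $\omega_{a_n-1}>\tfrac{1}{2}$ and thus acts effectively as a reflecting barrier (the walker almost always returns from brief leftward excursions), whereas the site $a_n+L_n$ has $\omega_{a_n+L_n}>\tfrac{1}{2}$ and acts effectively as an absorbing barrier (the walker is lost once it drifts past). The survival problem is therefore modeled by Brownian motion on $[0,L_n]$ with Neumann--Dirichlet boundary conditions, whose principal eigenvalue is $\pi^2/(8L_n^2)$, so phase (ii) succeeds with probability at least $\exp(-n\pi^2/(4L_n^2)(1+o(1)))$. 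Substituting $L_n\sim(1-\delta)\ln n/|\log\alpha|$, taking logarithms, and finally sending $\delta\to 0$ produces the matching $\liminf$.

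The \emph{main obstacle} is rigorously identifying the phase-(ii) survival rate as $\pi^2/(8L_n^2)$ rather than the $\pi^2/(2L_n^2)$ one would naively get from imposing absorbing boundary conditions on both ends of the trap; this is precisely the factor-of-$4$ distinction between the correct constant $|\pi\log\alpha|^2/4$ and the ``wrong'' $|\pi\log\alpha|^2$. Concretely, one has to show that the walker's short leftward excursions from $[a_n,a_n+L_n-1]$ return with high probability (which follows from $\omega\geq 1/2$ and uniform ellipticity) and then carry out a spectral comparison on a slowly enlarged interval $[a_n-K_n,a_n+L_n-1]$ whose dominant substochastic eigenvalue recovers the Neumann--Dirichlet rate up to $o(1/L_n^2)$ errors. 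This mirrors the spectral/slowdown techniques used in the marginally nestling analysis of \cite{gzQSubexp}.
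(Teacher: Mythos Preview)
Your overall plan matches the paper's: the upper bound comes from the quenched slowdown asymptotic (the paper invokes \cite{ppQSubexp} for $P_\omega(X_{2n}\le 2nv)$ and lets $v\to 0$; your use of $\{X_{2n}=0\}\subset\{X_{2n}\le 0\}$ is the same move), and the lower bound comes from trapping the walk in the longest run of fair sites. Two points on the lower bound, one an error and one a simplification you are missing.

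\textbf{The error.} Your search window $[0,n^{\delta/2}]$ is too small to contain a run of length $L_n=\lfloor(1-\delta)\ln n/|\log\alpha|\rfloor$. Such a run starts at a given site with probability $\alpha^{L_n}=n^{-(1-\delta)}$, so the expected count in $[0,n^{\delta/2}]$ is of order $n^{3\delta/2-1}\to 0$ for small $\delta$; Borel--Cantelli gives you \emph{no} run, not one. The paper instead searches in $[0,n/(\ln n)^3]$, where the longest fair run $L(n)$ satisfies $L(n)/\ln n\to 1/|\ln\alpha|$ a.s.\ by the standard Erd\H{o}s--R\'enyi longest-run law, and the travel cost $c^{O(n/(\ln n)^3)}$ is still negligible on the scale $n/(\ln n)^2$.

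\textbf{The simplification.} The paper bypasses your Neumann--Dirichlet spectral comparison entirely via an excursion decomposition. Writing $i_n$ for the left endpoint of the longest fair run and $\omega^\pm$ for the environment with a one-sided reflection inserted at the origin,
\[
P_\omega^{i_n}\bigl(X_k\in(-n/(\ln n)^3,\,i_n+L(n))\ \forall k\le 2n\bigr)
\ \ge\ P_{(\theta^{i_n}\omega)^-}\!\bigl(T_{-n/(\ln n)^3-i_n}>2n\bigr)\cdot P_{(\theta^{i_n}\omega)^+}\!\bigl(T_{L(n)}>2n\bigr).
\]
The first factor tends to $1$ by the explicit gambler's-ruin formula together with $E_P\log\rho_0<0$; this is what replaces your ``effective Neumann barrier'' heuristic, and it requires no spectral input and no assumption on $\omega_{i_n-1}$ beyond transience of the left half-line. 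For the second factor, since $\omega_x=\tfrac12$ on $[i_n,i_n+L(n))$, the right-reflected walk is literally simple symmetric random walk reflected at $0$, and the probability it stays below $L(n)$ for $2n$ steps equals, after unfolding the reflection, $P_{1/2}\bigl(\max_{k\le 2n}|X_k|<L(n)\bigr)$. Mogul'skii's small-deviation estimate then gives the rate $-\pi^2/8$ directly. The doubling of the interval under reflection is exactly what produces the factor $4$ in $|\pi\ln\alpha|^2/4$, without any mixed-boundary eigenvalue computation. Your spectral route would also succeed once the search window is corrected, but it is strictly more work.
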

\begin{proof}
Theorem 1 in \cite{ppQSubexp} implies that for any $v\in (0,\vp)$
\[
 \limsup_{n\ra\infty} \frac{(\ln n)^2}{n} \log P_\w(X_{2n} = 0) \leq \limsup_{n\ra\infty} \frac{(\ln n)^2}{n} \log P_\w(X_{2n} \leq 2nv ) = - \frac{|\pi \ln \a|^2}{4}\left( 1- \frac{v}{\vp} \right). 
\]
Taking $v\ra 0$ proves the upper bound needed.

For the corresponding lower bound we will force the random walk to stay in a portion of the environment where there is a long sequence of consecutive sites with $\w_x=\frac{1}{2}$. We say $x$ is a \emph{fair} site if $\w_x=\frac{1}{2}$.
For any $n\geq 1$, let 
\[
 L(n) := \max \left\{ j-i: 0\leq i < j \leq n/(\ln n)^3, \; \w_x = \frac{1}{2}, \; \forall i\leq x<j \right\}
\]
be the length of the longest interval of \emph{fair} sites in $[0,n/(\ln n)^3)$, and let
\[
 i_n := \inf\{ i \in [0,n/(\ln n)^3 - L(n)]:\; \w_x = 1/2, \; \forall i\leq x < i+L(n) \}
\]
be the leftmost endpoint of an interval of fair sites in $[0,n/(\ln n)^3]$ of maximal length $L(n)$. 
Theorem 3.2.1 in \cite{dzLDTA} implies that
\begin{equation}\label{maxfair}
 \lim_{n\ra\infty} \frac{L(n)}{\ln n} = \frac{1}{|\ln \a|}, \quad P-a.s. 
\end{equation}
Uniform ellipticity (Assumption \ref{UEIIDasm}) implies that 
\begin{align}
 \liminf_{n\ra\infty} \frac{(\ln n)^2}{n} \log P_\w( X_{2n} = 0) &\geq \liminf_{n\ra\infty} \frac{(\ln n)^2}{n} \log P_\w\left( \max_{k\leq 2n} |X_k| < n/(\ln n)^3 \right) \label{uefairconfine} \\
&\geq \liminf_{n\ra\infty} \frac{(\ln n)^2}{n} \log P_\w^{i_n}\left( X_k \in (- n/(\ln n)^3, i_n + L(n)), \; \forall k\leq 2n \right). \nonumber 
\end{align}
The random walk starting at $i_n$ stays in the interval $(-n/(\ln n)^3, i_n+L(n))$ if both the left excursions and right excursions from $i_n$ take more than $2n$ steps to leave the interval. 
Recalling the definitions of $\w^-$ and $\w^+$ given in \eqref{defrefl} we obtain that
\begin{align}
& P_\w^{i_n}\left( X_k \in (- n/(\ln n)^3, i_n + L(n)), \; \forall k\leq 2n \right) \nonumber \\
&\qquad \geq P_{(\theta^{i_n}\w)^-} \left( T_{-n/(\ln n)^3 - i_n} > 2n \right) P_{(\theta^{i_n}\w)^+} \left( T_{L(n)} > 2n \right). \label{fairtrap}
\end{align}

As in the proof of Lemma \ref{mnconfine}, explicit formulas for hitting probabilities and Assumption \ref{Tasm} imply that the first probability on the right hand side of \eqref{fairtrap} tends to 1 as $n\ra\infty$, $P-a.s$.
%
Due to the fact that $\w_x = 1/2$ for all $i_n\leq x < i_n+L(n)$, the second probability on the right hand side of \eqref{fairtrap} is equal to the probability that a simple symmetric random walk stays in the interval $[-L(n),L(n)]$ for $2n$ steps. 
To this end, we recall the following small deviation asymptotics for a simple symmetric random walk (see Theorem 3 in \cite{mSmDev}). 
\begin{lem}\label{SRWsmalldev}
 Let $\lim_{n\ra\infty} x(n) = \infty$ and $x(n) = o(\sqrt{n})$. Then, 
\[
 \lim_{n\ra\infty} \frac{x(n)^2}{n} \ln P_{1/2}\left( \max_{k\leq n} |X_k| \leq x(n) \right) = -\frac{\pi^2}{8}.
\]
\end{lem}
Then, Lemma \ref{SRWsmalldev} and \eqref{maxfair} imply that
\[
 \lim_{n\ra\infty} \frac{(\ln n)^2}{n} \log P_{(\theta^{i_n}\w)^+} \left( T_{L(n)} \geq 2n \right) 
= - \frac{|\pi \ln \a|^2}{4}. 
\]
Recalling \eqref{uefairconfine} and \eqref{fairtrap} implies the lower bound needed for the proof of the lemma. 
\end{proof}

To prove Theorem \ref{margnestlingcase} we need to compare the quenched probability to be at the origin at time $2n$ with the quenched probability to stay confined to the interval $[-n^\gamma, n^\gamma]$ for the first $2n$ steps of the random walk. The following proposition says that the exponential rate of the decay of the latter probability is also of the order $n/(\ln n)^2$ but with a larger constant than the probability to be at the origin.

\begin{prop}\label{mnconfine}
Let Assumptions \ref{UEIIDasm} and \ref{MNasm} hold. Then, for any $\gamma \in (0,1)$ 
\[
 \lim_{n\ra\infty} \frac{(\ln n)^2}{n} \ln P_\w \left( \max_{k\leq 2n} |X_k| \leq n^\gamma \right) = - \frac{| \pi \ln \a|^2}{4 \gamma^2 }, \qquad P-a.s. 
\]
\end{prop}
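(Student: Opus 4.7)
The proposition is the confinement analogue of Lemma~\ref{mnatzero}: instead of the walk being back at the origin at time $2n$, it must stay inside $[-n^\gamma, n^\gamma]$ for all $2n$ steps. The plan is to recycle the strategy of Lemma~\ref{mnatzero} for the lower bound and to prove a new principal-eigenvalue estimate for the upper bound. In both directions the point is that shrinking the admissible window from $n/(\ln n)^3$ down to $n^\gamma$ also shrinks the length of the longest run of fair sites available to the walk from $\ln n / |\ln \a|$ to $\gamma \ln n / |\ln \a|$, and this is precisely the source of the extra $\gamma^2$ in the denominator.

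For the lower bound, I would apply Theorem~3.2.1 of \cite{dzLDTA} to the sequence $\{\mathbf{1}\{\w_x = 1/2\}\}_{0\leq x \leq n^\gamma}$ to obtain a fair run of length $L_\gamma(n) = \gamma \ln n / |\ln \a|\,(1+o(1))$ starting at some site $i_n \in [0, n^\gamma]$, $P$-a.s. Uniform ellipticity gives
\begin{equation*}
P_\w\!\left(\max_{k\leq 2n}|X_k|\leq n^\gamma\right)
\ \geq\ c^{i_n}\,P_\w^{\,i_n}\!\left(X_k \in (-n^\gamma,\ i_n + L_\gamma(n)),\ \forall\,k\leq 2n\right),
\end{equation*}
and the prefactor $c^{i_n} \geq c^{n^\gamma} = \exp(-O(n^\gamma))$ is negligible on the scale $n/(\ln n)^2$ since $\gamma<1$. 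Splitting the confinement into left and right excursions from $i_n$ through the reflected environments $(\theta^{i_n}\w)^{\pm}$ exactly as in \eqref{fairtrap}, the left-excursion factor tends to one by Assumption~\ref{Tasm} combined with the explicit hitting-probability formula. The right-excursion factor reduces, by fairness of $\w$ on $[i_n, i_n + L_\gamma(n))$, to the survival probability of a simple symmetric random walk in an interval of length $L_\gamma(n)$ over $2n$ steps; Lemma~\ref{SRWsmalldev} with $x(n) = L_\gamma(n)$ then yields
\begin{equation*}
\lim_{n\to\infty}\frac{(\ln n)^2}{n}\ln P_{1/2}\!\left(\max_{k\leq 2n}|X_k|\leq L_\gamma(n)\right) \ =\ -\frac{|\pi\ln\a|^2}{4\gamma^2},
\end{equation*}
which is the desired rate.

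For the upper bound, I would use a spectral argument. Let $Q_n(\w)$ denote the substochastic kernel on $I_n = [-\lfloor n^\gamma\rfloor, \lfloor n^\gamma\rfloor]$ obtained from $P_\w$ by absorbing at $\pm(\lfloor n^\gamma\rfloor + 1)$, and let $\lambda_n(\w)$ be its principal eigenvalue. Since $Q_n$ is a substochastic birth--death kernel it is reversible with respect to an explicit invariant measure, and standard estimates give
\begin{equation*}
P_\w\!\left(\max_{k\leq 2n}|X_k| \leq n^\gamma\right) \ \leq\ C(\w)\,|I_n|\,\lambda_n(\w)^{2n}.
\end{equation*}
A Dirichlet-form upper bound on $\lambda_n(\w)$ via the cosine ground state of the discrete Laplacian on the longest fair block in $I_n$ shows $-\ln \lambda_n(\w) \geq \pi^2/(8L_\gamma(n)^2)\,(1+o(1))$, and reapplying Theorem~3.2.1 of \cite{dzLDTA} to the window $I_n$ confirms that the longest fair block there still has length $\gamma\ln n/|\ln\a|\,(1+o(1))$. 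Substitution then yields the required upper bound.

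The main obstacle is the lower bound on $-\ln\lambda_n(\w)$ with the correct constant $\pi^2/8$: one must argue that in the marginally nestling regime no combination of non-fair sites---each carrying a strictly rightward drift by Assumption~\ref{MNasm}---can produce a principal eigenvalue closer to $1$ than a purely fair block of the same length. Intuitively this is clear since non-fair sites help the walk escape rather than trap it, but it must be formalized via a Dirichlet-form comparison. An alternative that side-steps the spectral argument is to replace the confinement by the one-sided event and bound $P_\w(\max_{k\leq 2n}|X_k|\leq n^\gamma) \leq P_\w(T_{\lceil n^\gamma\rceil} > 2n)$, then adapt the quenched slowdown analysis of Theorem~1 in \cite{ppQSubexp}, whose proof already pinpoints the longest fair block inside the hitting window as the dominant trap.
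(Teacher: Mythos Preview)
Your lower bound is the paper's argument verbatim: locate the longest fair run in $[0,n^\gamma]$, of length $(\gamma/|\ln\alpha|+o(1))\ln n$ by Theorem~3.2.1 of \cite{dzLDTA}, pay a negligible $c^{i_n}$ to reach it, split into left/right excursions via the reflected environments, and apply Lemma~\ref{SRWsmalldev}. No issue there.

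The spectral route for the upper bound, however, has a genuine gap at exactly the point you flag. You write that ``a Dirichlet-form upper bound on $\lambda_n(\omega)$ via the cosine ground state of the discrete Laplacian on the longest fair block'' yields $-\ln\lambda_n(\omega)\geq \pi^2/(8L_\gamma(n)^2)$, but inserting a test function into the Rayleigh quotient gives a \emph{lower} bound on $\lambda_n$, not an upper one; that is the wrong direction for bounding the confinement probability from above. To upper bound $\lambda_n$ one would need a positive supersolution $\phi$ with $Q_n\phi\leq\mu\phi$ pointwise, and constructing such a $\phi$ that captures the sharp constant $\pi^2/8$ on the scale of the longest fair block---rather than the trivial scale $\pi^2/(2|I_n|^2)\sim n^{-2\gamma}$ one gets from simple random walk on all of $I_n$---is precisely the hard part. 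Your intuition that biased sites only help the walk escape is correct, but it tells you $\lambda_n(\omega)\leq\lambda_n(\text{SRW on }I_n)$, which is far too weak. Showing that $\lambda_n$ is governed by the longest fair block rather than the full interval requires quantifying how the biased sites expel the walk, and ``a Dirichlet-form comparison'' does not, by itself, do this.

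The paper takes your alternative route: it adapts the Pisztora--Povel block decomposition from \cite{ppQSubexp}. Space is partitioned into blocks of width $(\ln n)^{1-\delta}$, labelled fair or biased according to the density of sites with $\omega_x\geq 1/2+\xi$; consecutive blocks are grouped into regions bracketed by biased blocks. The walk is decomposed into excursions across regions, and a separate lemma (combinatorial, using the right-to-left crossing count $N_{2n}^{\leftarrow}$) shows the number $N$ of such excursions is at most $5n/(\ln n)^{3-2\delta}$ except on a super-negligible event. On each excursion one applies a uniform exponential-moment bound $E_\omega^x[e^{\lambda T_I}]\leq\chi(\rho)$ with $\lambda=(1-\rho)\pi^2/(8|I|^2)$, iterates via the strong Markov property, and finally uses that the largest region intersecting $[-n^\gamma,n^\gamma]$ has size at most $(1+\varepsilon)\gamma\ln n/\Lambda^*_{P(\omega_0\geq 1/2+\xi)}(\varepsilon)$. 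Sending $\rho,\varepsilon,\xi\to 0$ recovers the constant. So the paper's mechanism for the sharp upper bound is not an eigenvalue estimate at all but a path decomposition that controls both the number of excursions and the exponential cost per excursion; this is what your proposal is missing.
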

\begin{proof}
As in the proof of Lemma \ref{mnatzero}, a lower bound is obtained by forcing the random walk to stay near a long stretch of fair sites (i.e., sites $x\in\Z$ with $\w_x = 1/2$). However, Theorem 3.2.1 in \cite{dzLDTA} implies that for any $\gamma > 0$, the longest stretch of consecutive fair sites contained in $[0, n^\gamma]$ is $\left( \frac{\gamma}{|\ln \a|} + o(1) \right)\ln n$. The remainder of the proof of the lower bound is the same as in the proof of Lemma \ref{mnatzero} and is therefore omitted. 

The proof of the upper bound is an adaptation of the upper bound for quenched large deviations of slowdowns given by Pisztora and Povel \cite{ppQSubexp}.
It turns out that an upper bound for $P_\w(\max_{k\leq 2n} |X_k| \leq n^\gamma)$ with $\gamma<1$ is substantially easier to prove than the upper bounds for large deviations of the form $P_\w(X_n < nv)$ with $v \in (0,\vp)$. (In fact the multi-scale analysis present in \cite{ppQSubexp} can be avoided entirely.)

We begin by dividing up space into \emph{fair} and \emph{biased} blocks as was done in \cite{ppQSubexp}. Fix a $\d\in(0,1/3)$ and let $\e,\xi>0$ be such that $0 <\e < P(\w_0 \geq 1/2 + \xi)$ ($\e$ and $\xi$ will eventually be arbitrarily small). Divide $\Z$ into disjoint intervals (which we will call \emph{blocks}) 
\[
 B_j = B_j(n) := \left[ j \fl{(\ln n)^{1-\d}}, (j+1) \fl{(\ln n)^{1-\d}} \right), \quad j\in\Z.
\]
A block $B_j$ is called \emph{biased} if the proportion of sites $x\in B_j$ with $\w_x \geq 1/2 + \xi$ is at least $\e$. 
Otherwise the block is called \emph{fair}. 
Let $J(n,\d,\e,\xi) = J := \{ j\in\Z: B_j \text{ is biased} \}$ be the indices of the biased blocks, and let $J= \cup_{k\in\Z} \{j_k\}$, with $j_k$ increasing in $k$ and $j_{-1} < 0 \leq j_0$. Collect blocks into (overlapping) regions $R_k$ defined by 
\[
 R_k := \bigcup_{j=j_k}^{j_{k+1}} \overline{B}_j, \quad k\in\Z,
\]
where $\overline{B}_j = \left[ j \fl{(\ln n)^{1-\d}}, (j+1) \fl{(\ln n)^{1-\d}} \right]$.
Each region $R_k$ begins and ends with a biased block and all interior blocks (possibly none) are fair blocks. 


Note that with this construction the adjacent regions overlap, but each site $x\in \fl{(\ln n)^{1-\d}} \Z$ (which are the endpoints of the blocks) is contained in the interior of exactly one region. The random walk may then be decomposed into excursions within the different regions. Let $T_0=0$ and let $T_1$ be the first time the random walk reaches the boundary of the region having $X_{T_0}=0$ in its interior. For $k\geq 2$, let $T_k$ be the first time after $T_{k-1}$ that the random walk reaches the boundary of the region containing $X_{T_{k-1}}$ in its interior. Let $N:= \max\{ k: T_{k-1} < 2n \}$ be the number of excursions within regions started before time $2n$. The following lemma allows us to bound $N$ from above. 

\begin{lem}\label{Nexc}
For any environment $\w$,
\[
 \lim_{n\ra\infty} \frac{(\ln n)^2}{n} \ln P_\w \left( \max_{k\leq 2n} |X_k| \leq n^\gamma \;,\;  N \geq \frac{5n}{(\ln n)^{3-2\d}}  \right) = -\infty. 
\]
\end{lem}
\begin{proof}
As in \cite{ppQSubexp}, let $N_{2n}^{\la}$ denote the total number of instances before time $2n$ that the random walk crosses a biased block from right to left. Lemma 4 in \cite{ppQSubexp} implies that for any environment $\w$,
\begin{equation}\label{backub}
 \limsup_{n\ra\infty} \frac{(\ln n)^2}{n} \ln P_\w \left( N_{2n}^\la \geq \frac{2n}{(\ln n)^{3-2\d}} \right) = -\infty. 
\end{equation}
Also, note that on the event $\{ \max_{k\leq n} |X_k| \leq n^\gamma \}$ (cf. (45) in \cite{ppQSubexp}),
\[
 0 \leq N \leq \frac{2 n^\gamma}{(\ln n)^{1-\d}} + 2 N_{2n}^\la. 
\]
Indeed, there are at most $2 n^\gamma/(\ln n)^{1-\d}$ regions intersecting $[-n^\gamma, n^\gamma]$ and each of these can be crossed once without any left crossings of biased blocks. Since each region begins and ends with a biased block, additional excursions within regions can only be accomplished by crossing a biased block from right to left and each such right-left crossing of a biased block can contribute to at most two more excursions within regions. Therefore, for $n$ sufficiently large, 
\[
 P_\w \left( \max_{k\leq 2n} |X_k| \leq n^\gamma \;,\;  N \geq \frac{5n}{(\ln n)^{3-2\d}}  \right) \leq P_\w \left( N_{2n}^\la \geq \frac{2n}{(\ln n)^{3-2\d}} \right). 
\]
Recalling \eqref{backub} finishes the proof of Lemma \ref{Nexc}. 
\end{proof}

To finish the proof of Proposition \ref{mnconfine}, we need to show that
\be\label{Nsmallub}
 \limsup_{n\ra\infty} \frac{(\ln n)^2}{n} \ln P_\w\left(  \max_{k\leq 2n} |X_k| \leq n^\gamma \;,\;  N < \frac{5n}{(\ln n)^{3-2\d}}  \right) \leq - \frac{| \pi \ln \a|^2}{4 \gamma^2 }, \qquad P-a.s. 
\ee
We next modify the hitting times $T_k$ to account for exiting the interval $[-n^\gamma, n^\gamma]$ as well as reaching the boundary of a region. Let $T^*_0 = 0$, and $T^*_1$ be the first time the random walk either exits the interval $[-n^\gamma, n^{\gamma}]$ or reaches the boundary of the region containing $X_{T^*_0}$ in its interior. Similarly, for $k\geq 2$ let $T^*_k$ be the first time after $T^*_{k-1}$ that the random walk either exits the interval $[-n^\gamma, n^\gamma]$ or reaches the boundary of the region containing $X_{T^*_{k-1}}$ in its interior. 
Note that on the event $\{ \max_{k\leq 2n} |X_k| \leq n^{\gamma} \}$ we have $T_k = T^*_k$ for all $k < N$. 
Then, for any $K\in\N$ and any $\l>0$, the strong Markov property implies that 
 \begin{align}
& P_\w\left(  \max_{k\leq 2n} |X_k| \leq n^\gamma \;,\;  N = K  \right) 
\leq P_\w\left( \sum_{k=1}^K (T_k^*-T_{k-1}^*) > 2n; \; | X_{T^*_{k}}|  \leq n^\gamma , \; \forall k < K \right) \nonumber \\ 
&\quad \leq e^{-2\l n} E_\w\left[ \exp\left\{ \l \sum_{k=1}^K (T^*_k-T^*_{k-1}) \right\} \indd{| X_{T^*_{k}}|  \leq n^\gamma , \; \forall k < K} \right] \nonumber \\
&\quad \leq e^{-2\l n} E_\w\left[ \exp\left\{ \l \sum_{k=1}^{K-1} (T^*_k-T^*_{k-1}) \right\} E_\w^{X_{T_{K-1}^*}} \!\!\left[ e^{ \l T^*_1 } \right] \indd{| X_{T^*_{k}}|  \leq n^\gamma , \; \forall k < K}\right]. \label{iterate}
\end{align}
The inner expectation in the last line above is of the form $E^x_\w\left[ e^{\l T_I} \right]$, where $T_I = \min\{ k\geq 0: X_k \notin I \}$ is the first time the random walk exits the interval $I$, 
$I = R_j^\circ \cap [-n^\gamma, n^\gamma]$, and $R_j$ is the region with $x$ in its interior $R_j^\circ$.
As was shown in \cite{ppzSubexp} (see equation (47) through Lemma 3) we can choose an appropriate $\l$ and bound such expectations uniformly in $x$, $I$, and all environments $\w$ with $\w_x \geq 1/2$ for all $x\in I$. Indeed, for any $\rho \in (0,1)$ there exists a constant $\chi(\rho) \in (1,\infty)$ such that
\[
 E^x_\w\left[ e^{\l' T_I} \right] \leq \chi(\rho), \quad\text{where}\quad \l' =  \frac{(1-\rho)\pi^2}{8(|I|-1)^2}.
\]
Note that $\l'$ decreases in the size of the interval $I$. 
Thus, choosing $\l$ according to the largest region intersecting $[-n^\gamma, n^\gamma]$ and iterating the computation in \eqref{iterate} we obtain that 
\begin{equation}\label{fixedKub}
 P_\w\left(  \max_{k\leq 2n} |X_k| \leq n^\gamma \;,\;  N = K  \right) \leq \exp\left\{ - n  \frac{(1-\rho)\pi^2}{4(\max_j |R_j^\circ \cap [-n^\gamma, n^\gamma]|)^2} \right\} \chi(\rho)^K. 
\end{equation}
It remains to give an upper bound on $\max_j |R_j^\circ \cap [-n^\gamma, n^\gamma]|$. 
To this end, for any $p\in(0,1)$ and $x\in[0,1]$ let $\L^*_p(x) = x \ln (x/p) + (1-x)\ln((1-x)/(1-p))$ be the large deviation rate function for a Bernoulli($p$) random variable. Then, 
Lemma 3 in \cite{ppQSubexp} implies that for any $\e$ and $\xi$ fixed as in the definition of the blocks, $P-a.s.$, there exists an $n_1(\w,\e,\xi)$ such that 
\[
 \max_j |R_j^\circ \cap [-n^\gamma, n^\gamma]| \leq \frac{(1+\e)\gamma}{\Lambda^*_{P(\w_0 \geq 1/2 + \xi)}(\e)} \ln n, \qquad \forall n\geq n_1(\w,\e,\xi). 
\]
Therefore, recalling \eqref{fixedKub},
\begin{equation}
 \limsup_{n\ra\infty} \frac{(\ln n)^2}{n} \ln P_\w\left(  \max_{k\leq 2 n} |X_k| \leq n^\gamma \;,\;  N < \frac{5 n}{(\ln n)^{3-2\d}}  \right) \leq - \frac{(1-\rho) \pi^2 \left(\L^*_{P(\w_0 \geq 1/2 + \xi)}(\e)\right)^2}{4(1+\e)^2 \gamma^2}.  \label{allKub}
\end{equation}
Note that the left hand side does not depend on $\rho$, $\e$ or $\xi$ and that 
\[
 \lim_{\xi\ra 0} \lim_{\e\ra 0} \Lambda^*_{P(\w_0 \geq 1/2 + \xi)}(\e) = \lim_{\xi \ra 0} - \ln P(\w_0 < 1/2 + \xi) = -\ln P(\w_0 = 1/2) = -\ln\a.
\]
Thus, taking $\rho$, $\e$, and then $\xi$ to zero in \eqref{allKub} implies \eqref{Nsmallub} and thus finishes the proof of Proposition \ref{mnconfine}. 
\end{proof}

\begin{proof}[Proof of Theorem \ref{margnestlingcase}.]
We first prove \eqref{mndispub} for $\b<2$ which gives an upper bound on the maximal displacement of a bridge. First, note that
\be \label{mnsub}
 P_\w\left( \max_{k\leq 2n} |X_k| \geq \frac{n}{(\ln n)^\b} ,\; X_{2n}= 0 \right) \leq P_\w\left( T_{-n/(\ln n)^\b} < n \right) + P_{\theta^{n/(\ln n)^\b} \w}\left( T_{-n/(\ln n)^\b} < n \right) .
\ee
The shift invariance of $P$ and Chebychev's inequality imply that for any $\d>0$
\begin{align}
& P\left( P_\w\left( \max_{k\leq 2n} |X_k| \geq \frac{n}{(\ln n)^\b} ,\; X_{2n}= 0 \right) \geq 2 e^{-\d n/(\ln n)^\b} \right) \nn \\
&\qquad \leq 2 P\left(  P_\w\left( T_{-n/(\ln n)^\b} < n \right) > e^{-\d n/(\ln n)^\b} \right) \nn \\
&\qquad \leq 2 e^{\d n/(\ln n)^\b} \P\left( T_{-n/(\ln n)^\b} < n \right). \label{annbacktrack}
\end{align}
Lemma 2.2 in \cite{dpzTE1D} gives that $\P( T_{-x} < \infty) \leq \frac{(E_P \rho_0)^x}{1- E_P \rho_0}$ for any $x\geq 0$ (note that Assumption \ref{MNasm} implies that $E_P \rho_0 < 1$). Therefore, if $0<\d< - \ln (E_P \rho_0)$, then \eqref{annbacktrack} and the Borel-Cantelli Lemma imply that $ P_\w\left( \max_{k\leq 2n} |X_k| \geq \frac{n}{(\ln n)^\b} ,\; X_{2n}= 0 \right) \leq 2 e^{-\d n/(\ln n)^\b}$ for all sufficiently large $n$, $P-a.s.$
Therefore, Lemma \ref{mnatzero} implies that, $P-a.s.$, for $\d< - \ln (E_P \rho_0)$, $C> \frac{|\pi \log \a|^2}{4}$, and $n$ large enough,
\[
 P_\w\left( \max_{k\leq 2n} |X_k| \geq \frac{n}{(\ln n)^\b} \; \Bigl| \; X_{2n} = 0 \right) \leq \frac{2 e^{-\d n/(\ln n)^\b}}{ e^{-C n/(\ln n)^2}}. 
\]
If $\b<2$ the right hand side vanishes as $n\ra\infty$. 

To get the lower bound on the maximal displacement fix $\gamma \in (0,1)$. Then,
\begin{align*}
  P_\w\left( \max_{k\leq 2n} |X_k| > n^{\gamma } \; \Bigl| \; X_{2n} = 0 \right) &= 1 - P_\w\left( \max_{k\leq 2n} |X_k| \leq n^\gamma \; \Bigl| \; X_{2n} = 0 \right) \\
&\geq 1 - \frac{P_\w\left( \max_{k\leq 2n} |X_k| \leq n^\gamma \right)}{P_\w\left( X_{2n} = 0 \right)}
\end{align*}
Lemma \ref{mnatzero} and Proposition \ref{mnconfine} imply that the right hand side tends to 1 as $n\ra\infty$, $P-a.s.$, thus completing the proof of \eqref{mndisplb}. 
\end{proof}

\section{Case III: Non-nestling environment}\label{nonnestlingsection}
In this section we will make the following assumptions on the environment. 
\begin{asm}\label{NNPMasm}
 $\w_{\min} > \frac{1}{2}$, and $\a:=P(\w_0 = \w_{\min} ) \in(0,1)$.
\end{asm}
\begin{asm}\label{NNGasm}
 There exists an $\eta>0$ such that $P(\w_0 \in (\w_{\min}, \w_{\min}+\eta)) = 0$. 
\end{asm}
Assumption \ref{NNPMasm} is the crucial assumption needed for the main results. Assumption \ref{NNGasm} is a technical assumption and all the main results should be true under only Assumption \ref{NNPMasm}. Note that Assumption \ref{NNPMasm} implies Assumption \ref{Tasm}.

The main result in this section is that the magnitude of displacement in bridges is the same in the non-nestling case as it is in the marginally nestling case. 
\begin{thm}\label{nonnestlingcase}
  Let Assumptions \ref{UEIIDasm}, \ref{NNPMasm}, and \ref{NNGasm} hold. Then
for any $\b < 2$,
\begin{equation} \label{nndispub}
 \lim_{n\ra\infty} P_\w\left( \max_{k\leq 2n} |X_k| \geq \frac{n}{(\ln n)^\b} \; \Bigl| \; X_{2n} = 0 \right) = 0, \quad P-a.s.,
\end{equation}
and for any $\gamma < 1$,
\begin{equation}  \label{nndisplb}
  \lim_{n\ra\infty} P_\w\left( \max_{k\leq 2n} |X_k| \geq n^\gamma \; \Bigl| \; X_{2n} = 0 \right) = 1, \quad P-a.s.
\end{equation}
\end{thm}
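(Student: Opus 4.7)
The plan is to mirror the architecture of Section~\ref{margnestlingsection}, with the key distinction that the walker in the non-nestling regime has positive speed $\vp > 0$, so the return probability $P_\w(X_{2n}=0)$ carries a leading exponential factor $e^{-Cn}$ coming from the quenched large-deviation cost of slowing the walker to zero average displacement. The work divides into three steps, paralleling Lemma~\ref{mnatzero}, Proposition~\ref{mnconfine}, and the proof of Theorem~\ref{margnestlingcase}.

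For the first step I would establish Corollary~\ref{nnatzero},
\[
P_\w(X_{2n}=0) = \exp\bigl\{-Cn - C'n/(\ln n)^2 + o(n/(\ln n)^2)\bigr\}, \qquad P\text{-a.s.},
\]
for explicit $C,C' > 0$. The lower bound forces the walker into the longest stretch of sites with $\w_x = \w_{\min}$ near the origin, which by Theorem~3.2.1 of~\cite{dzLDTA} has length $(1+o(1))\ln n/|\ln\a|$; on such a stretch the walker behaves as a simple biased random walk with drift $2\w_{\min} - 1 > 0$, and the cost factorizes into the large-deviation cost $e^{-Cn}$ of staying against the drift and the small-deviation cost $e^{-C'n/(\ln n)^2}$ for a biased walk confined to an interval of that length (the latter an analog of Lemma~\ref{SRWsmalldev} for biased walks). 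The matching upper bound comes from the Pisztora--Povel block-decomposition of~\cite{ppQSubexp}, where Assumption~\ref{NNGasm} supplies the spectral gap at $\w_{\min}$ needed to read off the explicit constants. The same scheme, applied to confinement in $[-n^\gamma, n^\gamma]$ for $\gamma \in (0,1)$, yields
\[
P_\w\bigl(\max_{k\le 2n}|X_k|\le n^\gamma\bigr) = \exp\bigl\{-Cn - C''(\gamma)\,n/(\ln n)^2 + o(n/(\ln n)^2)\bigr\}
\]
with $C''(\gamma) > C'$ for $\gamma < 1$, because the longest $\w_{\min}$-stretch in $[0,n^\gamma]$ has length $(1+o(1))\gamma\ln n/|\ln\a|$, strictly shorter than the optimal one on the scale $n$. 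The lower bound on displacement~\eqref{nndisplb} then follows immediately since the ratio $P_\w(\max_{k\le 2n}|X_k|<n^\gamma)/P_\w(X_{2n}=0)$ equals $\exp\{-(C''(\gamma)-C')\,n/(\ln n)^2 + o(n/(\ln n)^2)\}$, which vanishes.

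For the upper bound on displacement~\eqref{nndispub}, the main obstacle is that the crude backtracking estimate used in the marginally nestling proof, which gives only $e^{-\d n/(\ln n)^\b}$ on the numerator, is not sharp enough in the presence of the leading $e^{-Cn}$ factor in the denominator: their ratio grows like $e^{Cn}$. To refine the numerator, I would apply strong Markov twice --- first at $T_{n/(\ln n)^\b}$, then in the shifted environment at the corresponding hitting time of $-n/(\ln n)^\b$ --- yielding
\[
P_\w\bigl(T_{n/(\ln n)^\b}<2n,\,X_{2n}=0\bigr) \le \sum_{t+u<2n} P_\w(T_{n/(\ln n)^\b}=t)\, P_{\theta^{n/(\ln n)^\b}\w}(T_{-n/(\ln n)^\b}=u)\, P_\w(X_{2n-t-u}=0),
\]
together with an analogous bound for the mirror event in which the walker backtracks first. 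Bounding the last factor via the refined Corollary~\ref{nnatzero} asymptotic at $s = 2n-t-u$ and summing against the joint law of the two hitting times via exponential-moment control yields a bound of the form $\exp\{-Cn - C'n/(\ln n)^2 - \eta\,n/(\ln n)^\b + o(n/(\ln n)^2)\}$ for some $\eta > 0$, so that the ratio to $P_\w(X_{2n}=0)$ vanishes whenever $\b < 2$.

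The hardest step will be the exponential-moment control at the critical exponent $C/2$ matched to the rate in Corollary~\ref{nnatzero}: the product $E_\w[e^{(C/2)T_{n/(\ln n)^\b}}]\cdot E_{\theta^{n/(\ln n)^\b}\w}[e^{(C/2)T_{-n/(\ln n)^\b}}\,\ind{T_{-n/(\ln n)^\b}<\infty}]$ sits at the borderline of convergence of the moment generating functions involved, so some care is needed. I expect decay by $e^{-\eta n/(\ln n)^\b}$ with $\eta > 0$ to follow by combining the annealed hitting estimate $\P(T_{-x}<\infty)\le (E_P\rho_0)^x/(1-E_P\rho_0)$ from Lemma~2.2 of~\cite{dpzTE1D} (transferred to the quenched level by Chebychev and Borel--Cantelli as in the marginally nestling proof) with a variational characterization of $C$: the local-trap strategy underpinning Corollary~\ref{nnatzero} is strictly cheaper than any strategy involving a detour of length $n/(\ln n)^\b$, and the gap is precisely the required factor $e^{-\eta n/(\ln n)^\b}$.
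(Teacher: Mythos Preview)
Your architecture for \eqref{nndisplb} is sound: once Corollary~\ref{nnatzero} and a Proposition~\ref{nnconfine}-type estimate are in hand, the ratio argument goes through exactly as you describe. The paper, however, obtains both of those ingredients --- and the whole of \eqref{nndispub} --- by a device you have not anticipated: a change of measure to a \emph{marginally nestling} environment. Setting $\tw_x := \rho_{\max}/(\rho_x+\rho_{\max})$ one checks that $\tw$ satisfies Assumption~\ref{MNasm} with the same $\a$, and that on $\{X_{2n}=0\}$ the Radon--Nikodym derivative obeys
\[
e^{-2I(0)n}\,c_1^{B_n}\;\le\;\frac{dP_\w}{dP_{\tw}}(X_{[0,2n]})\;\le\;e^{-2I(0)n}\,c_2^{B_n}
\]
for constants $0<c_1\le c_2<1$ (this is where Assumption~\ref{NNGasm} enters), with $B_n$ the number of visits before time $2n$ to sites with $\w_{X_k}>\w_{\min}$. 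This single identity reduces every probability $P_\w(A)$ with $A\subset\{X_{2n}=0\}$ to $P_{\tw}(A)$, modulo the uniform factor $e^{-2I(0)n}$ and a correction governed by $B_n$. Corollary~\ref{nnatzero} and Proposition~\ref{nnconfine} then follow from the marginally nestling machinery plus a short argument bounding $B_n$, and \eqref{nndispub} is immediate: since $c_2^{B_n}\le 1$,
\[
P_\w\Bigl(\max_{k\le 2n}|X_k|\ge \tfrac{n}{(\ln n)^\b},\,X_{2n}=0\Bigr)\le e^{-2I(0)n}\,P_{\tw}\Bigl(\max_{k\le 2n}|X_k|\ge \tfrac{n}{(\ln n)^\b},\,X_{2n}=0\Bigr),
\]
and the marginally nestling bound already established in the proof of Theorem~\ref{margnestlingcase} controls the right-hand factor by $e^{-\d n/(\ln n)^\b}$.

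Your direct attack on \eqref{nndispub} has a genuine gap, exactly at the step you flag as hardest. You need the product $E_\w[e^{I(0)T_x}]\cdot E_{\theta^x\w}[e^{I(0)T_{-x}}\ind{T_{-x}<\infty}]$ (with $I(0)=C/2$) to decay like $e^{-\eta x}$, but $I(0)$ is precisely the critical tilt. In the homogeneous case $\w\equiv p>\tfrac12$ one computes $E[e^{I(0)T_1}]=\sqrt{p/q}$ and $E[e^{I(0)T_{-1}}\ind{T_{-1}<\infty}]=\sqrt{q/p}$, so the product over $x$ steps is identically $1$ --- no decay whatsoever, and hence no extra $e^{-\eta n/(\ln n)^\b}$ factor. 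In the genuinely random case the sites with $\w_i>\w_{\min}$ may furnish some saving, but extracting it amounts to the same pathwise bookkeeping that the change of measure performs automatically; the ``variational characterization of $C$'' you invoke is not an argument. Likewise, recovering the exact second-order constant $|\pi\ln\a|^2/\gamma^2$ in the confinement upper bound by a direct Pisztora--Povel decomposition, without first tilting to $\tw$, would require carrying the exponential tilt $e^{I(0)\cdot}$ through that whole block analysis, which is substantially more delicate than the paper's route.
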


The quenched large deviation principle for $X_n/n$, established in  \cite{cgzLDP,gdhLDP} implies that there exists a deterministic function $I(v)$ such that $P_\w(X_n/n \approx v) \approx e^{-nI(v)}$. While there is a probabilistic formula for $I(v)$, it is not computable in practice for most values of $v$. However, when $v=0$, the value is known. If the environment is nestling or marginally nestling then $I(0)=0$, but in the non-nestling case 
\[
 I(0) = -\frac{1}{2}\ln\left( 4 \w_{\min}(1-\w_{\min}) \right) > 0.
\]
As was the case for Theorem \ref{margnestlingcase}, the keys to the proof of Theorem \ref{nonnestlingcase} are the asymptotics of the quenched probabilities 
to be at the origin at time $2n$ or to stay confined to the interval $[-n^\gamma, n^\gamma]$ for the first $2n$ steps of the random walk. As will be shown below, both of these probabilities have the same exponential rate of decay:
\[
 \lim_{n\ra\infty} \frac{1}{n} \ln P_\w(X_{2n} = 0) = \lim_{n\ra\infty} \frac{1}{n} \ln P_\w\left( \max_{k\leq 2n} |X_k| \leq n^\gamma \;,\; X_{2n} = 0 \right) = -2I(0), \qquad P-a.s.
\]
Thus, in order to prove Theorem \ref{nonnestlingcase}, more precise asymptotics of the decays of these quenched probabilities will be needed. 

A key tool of our analysis in the non-nestling case will be a transformation of the environment $\w$ into an environment $\tw$ that is marginally nestling. 
For any environment $\w$, let $\tw$ be the environment defined by 
\begin{equation}\label{twdef}
 \tw_x = \frac{\rho_{\max}}{\rho_x + \rho_{\max}}, x\in \Z
\end{equation}
where $\rho_{\max}= \frac{1-\w_{\min}}{\w_{\min}}$. Note that under Assumption \ref{NNPMasm}, $P(\tw_x \geq \frac{1}{2}) = 1$ and $P(\tw_x = \frac{1}{2}) = P(\w_x = \w_{\min}) = \a>0$. 
The usefulness of this transformation stems from the following Lemma.
\begin{lem}\label{COMlem}
Let Assumptions \ref{NNPMasm} and \ref{NNGasm} hold, and let 
\[
 B_n := \# \{ k < 2n: \w_{X_k} > \w_{\min} \}.
\]
Then there exist constants $c_1,c_2\in(0,1)$ (depending on $\w_{\min}$ and $\eta$) such that for any event $A \in \s(X_0,X_1, \ldots, X_{2n})$ depending only on the first $2n$ steps of the random walk such that $A \subset \{X_{2n}=0\}$,
\[
 e^{-2I(0)n} E_{\tw}\left[ c_1^{B_n} \mathbf{1}_A \right] \leq P_\w( A ) \leq e^{-2I(0)n} E_{\tw}\left[ c_2^{B_n} \mathbf{1}_A \right], \quad P-a.s.
\]
\end{lem}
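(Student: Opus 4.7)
The plan is to prove the lemma by a direct pathwise change-of-measure comparison between $P_\w$ and $P_{\tw}$. The constant $e^{-2I(0)}$ per step will arise naturally from the sites where $\w_x=\w_{\min}$, and the constants $c_1,c_2$ will measure the per-visit cost/gain at the remaining sites.

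First I would fix a path $\pi=(x_0,x_1,\ldots,x_{2n})$ with $x_0=x_{2n}=0$, and let $R_x(\pi)$, $L_x(\pi)$, and $V_x(\pi)=R_x(\pi)+L_x(\pi)$ denote the number of right-steps, left-steps, and total visits (before time $2n$) at site $x$ along $\pi$. Since $\pi$ returns to $0$, $\sum_x R_x(\pi)=n$ and $\sum_x V_x(\pi)=2n$. Using $\w_x=1/(1+\rho_x)$ and $\tw_x=\rho_{\max}/(\rho_x+\rho_{\max})$ one computes $\w_x/\tw_x=(\rho_x+\rho_{\max})/[\rho_{\max}(1+\rho_x)]$ and $(1-\w_x)/(1-\tw_x)=(\rho_x+\rho_{\max})/(1+\rho_x)$, so that
\[
\frac{P_\w(\pi)}{P_{\tw}(\pi)}=\prod_x\Bigl(\tfrac{\w_x}{\tw_x}\Bigr)^{R_x(\pi)}\Bigl(\tfrac{1-\w_x}{1-\tw_x}\Bigr)^{L_x(\pi)}=\rho_{\max}^{-n}\prod_x g(\rho_x)^{V_x(\pi)},\qquad g(\rho):=\tfrac{\rho+\rho_{\max}}{1+\rho}.
\]

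Next I would factor out the contribution from the sites where $\w_x=\w_{\min}$ (i.e., $\rho_x=\rho_{\max}$). There $g(\rho_{\max})=2\rho_{\max}/(1+\rho_{\max})=2(1-\w_{\min})$, and using $\sum_x V_x(\pi)=2n$ together with the identities $\rho_{\max}=(1-\w_{\min})/\w_{\min}$, $1+\rho_{\max}=1/\w_{\min}$, one checks
\[
\rho_{\max}^{-n}\Bigl(\tfrac{2\rho_{\max}}{1+\rho_{\max}}\Bigr)^{2n}=\Bigl(\tfrac{4\rho_{\max}}{(1+\rho_{\max})^2}\Bigr)^n=\bigl(4\w_{\min}(1-\w_{\min})\bigr)^n=e^{-2I(0)n}.
\]
Writing $B_n(\pi)=\sum_{x:\w_x>\w_{\min}}V_x(\pi)$, this leaves
\[
\frac{P_\w(\pi)}{P_{\tw}(\pi)}=e^{-2I(0)n}\prod_{x:\w_x>\w_{\min}}h(\rho_x)^{V_x(\pi)},\qquad h(\rho):=\frac{g(\rho)}{g(\rho_{\max})}=\frac{(\rho+\rho_{\max})(1+\rho_{\max})}{2\rho_{\max}(1+\rho)}.
\]

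The main task is then to show $h(\rho_x)\in[c_1,c_2]\subset(0,1)$ uniformly over the sites with $\w_x>\w_{\min}$. A quick derivative computation gives $h'(\rho)=(1+\rho_{\max})(1-\rho_{\max})/[2\rho_{\max}(1+\rho)^2]$, which is strictly positive because Assumption \ref{NNPMasm} forces $\w_{\min}>1/2$, hence $\rho_{\max}<1$. So $h$ is strictly increasing on $[0,\rho_{\max}]$ with $h(\rho_{\max})=1$. Assumption \ref{NNGasm} then yields $\w_x\geq\w_{\min}+\eta$ whenever $\w_x>\w_{\min}$, hence $\rho_x\leq\rho^*:=(1-\w_{\min}-\eta)/(\w_{\min}+\eta)<\rho_{\max}$, and Assumption \ref{UEIIDasm} gives $\rho_x\geq c/(1-c)>0$; setting $c_1:=h(c/(1-c))$ and $c_2:=h(\rho^*)$ therefore produces constants in $(0,1)$ with $c_1\leq h(\rho_x)\leq c_2$. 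Since $V_x(\pi)\geq 0$ and $c_1\leq h\leq c_2<1$, we obtain $c_1^{V_x(\pi)}\leq h(\rho_x)^{V_x(\pi)}\leq c_2^{V_x(\pi)}$, and multiplying over $x$ with $\w_x>\w_{\min}$ gives the two-sided bound
\[
e^{-2I(0)n}c_1^{B_n(\pi)}\leq\frac{P_\w(\pi)}{P_{\tw}(\pi)}\leq e^{-2I(0)n}c_2^{B_n(\pi)}.
\]
Summing against $P_{\tw}(\pi)\mathbf{1}_A(\pi)$ over all paths $\pi$ yields the claim for every $A\subset\{X_{2n}=0\}$. The only delicate point is the uniform lower/upper bound on $h$: Assumption \ref{NNGasm} is essential for pushing $h(\rho_x)$ strictly below $1$ at the non-minimum sites, while uniform ellipticity is what keeps it strictly above $0$.
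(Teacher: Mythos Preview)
Your proof is correct and follows essentially the same change-of-measure computation as the paper: both express the Radon--Nikodym derivative as $\rho_{\max}^{-n}\prod_k \w_{X_k}(\rho_{X_k}+\rho_{\max})$ on $\{X_{2n}=0\}$, factor out $e^{-2I(0)n}$ from the $2n-B_n$ visits to minimal sites, and use monotonicity together with the gap assumption to bound the per-visit ratio $h$ at the remaining sites. One small remark: you do not need uniform ellipticity for the lower bound on $h$, since $h(0)=(1+\rho_{\max})/2>0$ already; taking $c_1=h(0)$ gives a constant depending only on $\w_{\min}$ (and $\eta$), matching the lemma's statement exactly.
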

\begin{proof}
Let $X_{[0,2n]} = (X_0, X_1, \ldots X_{2n})$ denote the path of the random walk in the time interval $[0,2n]$.
Then, 
\begin{equation}\label{tilting}
 P_\w( A ) = E_{\tw}\left[ \frac{dP_\w}{dP_{\tw}}(X_{[0,2n]}) \mathbf{1}_A \right], 
\end{equation}
where for any $\mathbf{x}_{[0,2n]}= (x_0,x_1,\ldots, x_{2n})$ we have 
\[
 \frac{dP_\w}{dP_{\tw}}(\mathbf{x}_{[0,2n]}) = \frac{P_\w( X_k= x_k, \; \forall k\in[0,2n] )}{P_{\tw}( X_k= x_k, \; \forall k\in[0,2n] )} =\prod_{k=0}^{2n-1} \frac{P_\w^{x_k}( X_1 = x_{k+1} )}{P_{\tw}^{x_k}( X_1 = x_{k+1} )}.
\]
Recalling the formula for $\tw_x$ in \eqref{twdef}, we obtain that 
\[
 \frac{P_\w^{x_k}( X_1 = x_{k+1} )}{P_{\tw}^{x_k}( X_1 = x_{k+1} )}  =
\begin{cases}
 \w_x(\rho_x + \rho_{\max})\rho_{\max}^{-1} & \text{ if } x_{k+1}= x_k+1 \\
 \w_x(\rho_x + \rho_{\max}) & \text{ if } x_{k+1}= x_k-1.
\end{cases}
\]
Then since $X_{2n}=0$ implies that exactly half of the first $2n$ steps of the random walk are to the right we obtain that,
\begin{equation}\label{COMatzero}
 \frac{dP_\w}{dP_{\tw}}(X_{[0,2n]}) = \rho_{\max}^{-n} \prod_{k=0}^{2n-1}\w_{X_k}(\rho_{X_k} + \rho_{\max}), \quad \forall X_{[0,2n]} \in\{ X_{2n}=0\}. 
\end{equation}

If $\w_x = \w_{\min}$ then $\w_x(\rho_x + \rho_{\max}) = 2(1-\w_{\min})$, and since $\w_x(\rho_x + \rho_{\max})$ is decreasing in $\w_x$ there exist constants $c_1,c_2 <1$ (depending on $\w_{\min}$ and $\eta$) such that 
\[
 c_1 2(1-\w_{\min}) \leq \w_x(\rho_x + \rho_{\max}) \leq c_2 2(1-\w_{\min}), \quad \forall \w_x \in [\w_{\min} + \eta, 1]. 
\]
Since Assumption \ref{NNGasm} implies that $P\left(\w_x \in \{\w_{\min}\} \cup [\w_{\min} + \eta, 1] \right) = 1$, 
\eqref{COMatzero} implies that, $P-a.s.$,
\begin{equation}\label{COMlbub}
 \rho_{\max}^{-n} (2 (1-\w_{\min}))^{2n} c_1^{B_n} \leq \frac{dP_\w}{dP_{\tw}}(X_{[0,2n]}) \leq \rho_{\max}^{-n} (2 (1-\w_{\min}))^{2n} c_2^{B_n}, \quad \forall X_{[0,2n]} \in\{ X_{2n}=0\}.
\end{equation}
Since $\rho_{\max}^{-n} (2 (1-\w_{\min}))^{2n} = (4 \w_{\min} (1-\w_{\min}))^n = e^{-2I(0)n}$, applying \eqref{COMlbub} to \eqref{tilting} completes the proof of the Lemma. 
\end{proof}

We now apply Lemma \ref{COMlem} to prove precise decay rates of certain quenched probabilities. 
\begin{prop}\label{nnconfine}
Let Assumptions \ref{UEIIDasm}, \ref{NNPMasm}, and \ref{NNGasm} hold. Then, for any $\gamma \in (0,1]$,
\[
 \limsup_{n\ra\infty} \frac{(\ln n)^2}{n} \left\{ \ln P_\w \left( \max_{k\leq 2n} |X_k| \leq n^\gamma \; , \; X_{2n} = 0 \right) + 2n I(0) \right\} = - \frac{| \pi \ln \a|^2}{\gamma^2 }, \qquad P-a.s. 
\]
\end{prop}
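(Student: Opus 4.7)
The plan is to invoke Lemma \ref{COMlem} to pass to the marginally nestling environment $\tw$, and then to establish matching lower and upper bounds. Setting $A_n := \{\max_{k \leq 2n} |X_k| \leq n^\gamma\} \cap \{X_{2n} = 0\}$, Lemma \ref{COMlem} sandwiches $\ln P_\w(A_n) + 2 I(0) n$ between $\ln E_{\tw}[c_1^{B_n} \mathbf{1}_{A_n}]$ and $\ln E_{\tw}[c_2^{B_n} \mathbf{1}_{A_n}]$. Under $\tw$ the environment is marginally nestling with $P(\tw_x = 1/2) = P(\w_x = \w_{\min}) = \a$, and by Assumption \ref{NNGasm}, $\tw_x \geq 1/2 + \xi$ for some $\xi > 0$ at every \emph{non-fair} site (one with $\w_x > \w_{\min}$). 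The weight $c^{B_n}$ penalizes each visit to a non-fair site and, heuristically, confines the dominant paths to individual fair stretches, so that the coefficient in the exponent improves from $\pi^2/(4 L^2)$ (as in Proposition \ref{mnconfine}) to $\pi^2/L^2$, which is exactly the source of the factor-of-$4$ enlargement of the constant here.

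For the lower bound I would fix $\e > 0$, set $\gamma' := \gamma \wedge (1 - \e)$, and use Theorem 3.2.1 in \cite{dzLDTA} to produce, $P$-a.s.\ for all $n$ large, a stretch $[i_n, i_n + L_n] \subset [0, n^{\gamma'}]$ of fair sites with $L_n = (1 - o(1)) \gamma' \ln n / |\ln \a|$. The strategy forces the walk to (i) follow the deterministic path $0 \to 1 \to \cdots \to i_n$, (ii) perform an SRW excursion confined to $[i_n, i_n+L_n]$ of duration $2n - 2 i_n$ ending at $i_n$, and (iii) retrace the deterministic path back to $0$. Steps (i) and (iii) have $P_{\tw}$-probability at least $c_*^{i_n}$ by uniform ellipticity of $\tw$ and contribute at most $2 i_n \leq 2 n^{\gamma'}$ non-fair visits, so jointly contribute $\exp\{-o(n/(\ln n)^2)\}$ since $\gamma' < 1$. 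Step (ii) is controlled by the Dirichlet SRW spectrum on an interval of length $L_n$: the bridge analogue of Lemma \ref{SRWsmalldev} gives probability at least $L_n^{-O(1)} \exp\{-\pi^2 (2n - 2 i_n)/(2(L_n+1)^2)\}$. Combining and sending $\e \downarrow 0$ after the liminf yields the lower bound $-|\pi \ln \a|^2 / \gamma^2$.

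For the upper bound I would adapt the Pisztora--Povel block-decomposition/spectral argument from the upper bound of Proposition \ref{mnconfine}, exploiting that the weight $c_2^{B_n}$ acts as a Feynman--Kac killing on non-fair sites. The key new ingredient, replacing the bound on $E_\w^x[e^{\l T_I}]$ used there, is
\[
 E_{\tw}^x\Bigl[ e^{\l T_I^*} \, c_2^{\#\{k < T_I^* \,:\, \w_{X_k} > \w_{\min}\}} \Bigr] \leq \chi(\rho) < \infty,
\]
valid for $\l = (1-\rho) \pi^2/(2 (L_n^{\max})^2)$, where $L_n^{\max}$ is the longest fair stretch in $[-n^\gamma, n^\gamma]$: the killing collapses the relevant length scale from that of the region $R_j$ to that of its longest fair sub-stretch. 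Iterating as in \eqref{iterate}--\eqref{fixedKub}, using Lemma \ref{Nexc} to control the number $N$ of excursions, and invoking $L_n^{\max} \leq (1+\e) \gamma \ln n / |\ln \a|$ $P$-a.s.\ (Theorem 3.2.1 of \cite{dzLDTA}), one obtains
\[
 \ln E_{\tw}[c_2^{B_n} \mathbf{1}_{A_n}] \leq -\frac{(1-\rho)\, |\pi \ln \a|^2\, n}{(1+\e)^2 \gamma^2 (\ln n)^2} + O(N),
\]
and sending $\rho, \e \downarrow 0$ produces the matching upper bound.

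The main obstacle is the Feynman--Kac moment-generating-function estimate above. One must show that the killing $c_2^{B_n}$ really does render each non-fair site effectively absorbing for the weighted walk, so that the spectral radius of the corresponding sub-Markov kernel on each region $R_j$ is governed by $\pi^2/(2 (L_j^{\max})^2)$ with $L_j^{\max}$ the longest fair stretch inside $R_j$, rather than the larger length of $R_j$ itself. The cleanest route is probably a direct spectral-radius/eigenfunction comparison for the sub-stochastic kernel $Q(x, y) = c_2^{\mathbf{1}\{\w_x > \w_{\min}\}} P_{\tw}^x(X_1 = y)$ restricted to $R_j$; alternatively, one can refine the exit-time generating-function arguments of \cite{ppQSubexp, ppzSubexp} by tracking the count of non-fair visits simultaneously with the exit time from $I$.
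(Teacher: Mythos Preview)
Your lower bound is essentially the paper's: force the walk to the longest fair stretch in $[0,n^\gamma]$ (or $[0,n/(\ln n)^3]$ when $\gamma=1$), stay inside it for time $\approx 2n$, and use Lemma~\ref{SRWsmalldev}. The paper phrases the confinement as $P_{1/2}(\max_{k\leq 2n}|X_k|\leq \tfrac{\gamma}{2|\ln\a|}\ln n)$ and restricts to $B_n\leq 2n/(\ln n)^3$ before applying Lemma~\ref{COMlem}, but the mechanism is identical.

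For the upper bound, however, the paper does \emph{not} go through the Pisztora--Povel block/region machinery at all, and thereby sidesteps exactly the Feynman--Kac spectral estimate you flag as the main obstacle. The paper's route is much more elementary: after applying Lemma~\ref{COMlem}, split on the value of $B_n$. If $B_n>n/(\ln n)^{2-\d}$ then $c_2^{B_n}$ alone already contributes $\exp\{-c\,n/(\ln n)^{2-\d}\}$, which is negligible. If $B_n\leq n/(\ln n)^{2-\d}$, then the walk visits biased sites at most $n/(\ln n)^{2-\d}$ times, so its trajectory decomposes into at most that many excursions inside maximal fair stretches of $[-n^\gamma,n^\gamma]$; each such excursion is, in law, a simple random walk exit time $\sigma_{j,n}$ from an interval of length at most $(1+\e)\gamma\ln n/|\ln\a|$. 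Thus
\[
P_{\tw}\Bigl(\max_{k\leq 2n}|X_k|\leq n^\gamma,\ B_n\leq \tfrac{n}{(\ln n)^{2-\d}}\Bigr)
\leq P_{\tw}(\tau\geq \e n) + P_{1/2}\Bigl(\sum_{j\leq n/(\ln n)^{2-\d}}\sigma_{j,n}\geq 2n(1-\e)\Bigr),
\]
and the right-hand side is handled by Chebychev with an explicit bound on $E_{1/2}[e^{\l\sigma}]$ (Lemma~\ref{MGFub}). No weighted sub-Markov kernel, no eigenfunction comparison, no regions $R_j$.

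The gap in your proposal is real: the inequality $E_{\tw}^x[e^{\l T_I^*}c_2^{\#\{\cdots\}}]\leq \chi(\rho)$ with $\l=(1-\rho)\pi^2/(2(L_n^{\max})^2)$ is not obvious, since a region $R_j$ can be much longer than any fair stretch it contains, and you would need to show that the killing at every non-fair site compresses the effective spectral length down to $L_n^{\max}$. This may well be provable, but it is strictly harder than what is needed; the paper's splitting on $B_n$ trades that spectral question for a trivial union bound plus an i.i.d.\ sum of SRW exit times.
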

Before giving the proof of Proposition \ref{nnconfine} we state the following corollary which is obtained by taking $\gamma = 1$. 
\begin{cor}\label{nnatzero}
Let Assumptions \ref{UEIIDasm}, \ref{NNPMasm}, and \ref{NNGasm} hold. Then,
\[
\lim_{n\ra\infty} \frac{(\ln n)^2}{n} \left\{ \ln P_\w(X_{2n} = 0) + 2n I(0) \right\} = -|\pi \ln \a|^2. 
\]
\end{cor}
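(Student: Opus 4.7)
The plan is to derive Corollary \ref{nnatzero} essentially for free from Proposition \ref{nnconfine} by specializing to $\gamma = 1$. The key combinatorial observation is that since the RWRE jumps by $\pm 1$, the event $\{X_{2n}=0\}$ forces exactly $n$ right-steps and $n$ left-steps, so $|X_k|\leq \min(k,2n-k) \leq n$ for all $0\leq k\leq 2n$. Hence $\{X_{2n}=0\} \subseteq \{\max_{k\leq 2n}|X_k| \leq n\}$, which means
\[
P_\w\!\left(\max_{k\leq 2n}|X_k| \leq n,\; X_{2n}=0\right) = P_\w(X_{2n}=0).
\]
Substituting $\gamma=1$ in Proposition \ref{nnconfine} therefore gives the upper bound
\[
\limsup_{n\to\infty}\frac{(\ln n)^2}{n}\left\{\ln P_\w(X_{2n}=0) + 2nI(0)\right\} \leq -|\pi\ln\alpha|^2, \qquad P\text{-a.s.}
\]

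For the matching lower bound, I would use the trivial monotonicity
\[
P_\w(X_{2n}=0) \geq P_\w\!\left(\max_{k\leq 2n}|X_k|\leq n^\gamma,\; X_{2n}=0\right)
\]
for each fixed $\gamma\in(0,1)$, and invoke the lower-bound half of Proposition \ref{nnconfine} (which is obtained by the explicit fair-block confinement strategy combined with the change-of-measure estimate of Lemma \ref{COMlem}, yielding the rate $-|\pi\ln\alpha|^2/\gamma^2$). This gives
\[
\liminf_{n\to\infty}\frac{(\ln n)^2}{n}\left\{\ln P_\w(X_{2n}=0) + 2nI(0)\right\} \geq -\frac{|\pi\ln\alpha|^2}{\gamma^2}, \qquad P\text{-a.s.}
\]
Letting $\gamma\uparrow 1$ along a countable sequence (to preserve the $P$-a.s.\ quantifier) yields $\liminf \geq -|\pi\ln\alpha|^2$.

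Combining the two bounds gives the claimed limit. The only nontrivial point is the final $\gamma\uparrow 1$ passage, and even this is painless since $-|\pi\ln\alpha|^2/\gamma^2$ is continuous in $\gamma$ at $\gamma=1$; there is no genuine obstacle here beyond cleanly separating the upper and lower halves of Proposition \ref{nnconfine} and noting the elementary inclusion $\{X_{2n}=0\}\subseteq\{\max|X_k|\leq n\}$.
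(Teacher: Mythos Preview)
Your proof is correct and is essentially the paper's own argument: the paper derives Corollary~\ref{nnatzero} simply by taking $\gamma=1$ in Proposition~\ref{nnconfine}, using exactly the inclusion $\{X_{2n}=0\}\subseteq\{\max_{k\leq 2n}|X_k|\leq n\}$ that you spell out. Your detour for the lower bound (working with $\gamma<1$ and letting $\gamma\uparrow 1$) is unnecessary, since Proposition~\ref{nnconfine} is stated and proved for $\gamma\in(0,1]$---the lower-bound half of its proof explicitly treats $\gamma=1$ by using the longest fair stretch in $[0,n/(\ln n)^3]$---so you can invoke $\gamma=1$ directly for both halves.
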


\begin{rem}
Since $B_n \geq 0$ by definition and the environment $\tw$ is marginally nestling, a simple application of Lemma \ref{COMlem} and Proposition \ref{mnconfine} implies that 
\begin{align*}
& \limsup_{n\ra\infty} \frac{(\ln n)^2}{n} \left\{ \ln P_\w \left( \max_{k\leq 2n} |X_k| \leq n^\gamma \; , \; X_{2n} = 0 \right) + 2n I(0) \right\} \\
&\qquad \leq \limsup_{n\ra\infty} \frac{(\ln n)^2}{n}  \ln P_{\tw} \left( \max_{k\leq 2n} |X_k| \leq n^\gamma \; , \; X_{2n} = 0 \right) 
\leq - \frac{| \pi \ln \a|^2}{4 \gamma^2 }, \qquad P-a.s. 
\end{align*}
This does not quite give the correct upper bound obtained in Proposition \ref{nnconfine} and reflects a subtle but important difference between the way that a RWRE is confined to the interval $[-n^\gamma, n^\gamma]$ in marginally nestling and non-nestling environments. 
The proof of Proposition \ref{mnconfine} suggests that under Assumption \ref{MNasm}, $B_n$ is typically of the order $n/\ln n$ on the event $\{ \max_{k\leq 2n} |X_k| \leq n^\gamma \}$. 
In contrast, the proof of Proposition \ref{nnconfine} below will suggest that under Assumptions \ref{NNPMasm} and \ref{NNGasm}, $B_n$ is typically less than $n/(\ln n)^{2-\d}$ for any $\d>0$ on the event $\{ \max_{k\leq 2n} |X_k| \leq n^\gamma \}$. 
\end{rem}

\begin{proof}[Proof of Proposition \ref{nnconfine}]
 We first give a lower bound. Lemma \ref{COMlem} implies that, $P-a.s.$,
\begin{align}
& P_\w\left( \max_{k\leq 2n} |X_k| \leq n^\gamma \; , \; X_{2n} = 0 \right) \nonumber \\
&\quad \geq P_\w\left( \max_{k\leq 2n} |X_k| \leq n^\gamma \; , \; X_{2n} = 0 \;,\; B_n \leq \frac{2 n}{(\ln n)^3} \right) \nonumber \\
&\quad \geq e^{-2I(0)n} c_1^{n/(\ln n)^3} P_{\tw}\left( \max_{k\leq 2n} |X_k| \leq n^\gamma \; , \; X_{2n} = 0 \;,\; B_n \leq \frac{2 n}{(\ln n)^3} \right), \label{nnconfinelb}
\end{align}
where $B_n= \#\{ k\leq 2n: \tw_{X_k} > 1/2 \}$ is the number of visits to biased sites in the environment $\tw$ before time $2n$. 
Since $\tw$ is a marginally nestling environment, we can obtain a lower bound for the last probability above in a similar manner as the lower bounds for Lemma \ref{mnatzero} and Proposition \ref{mnconfine}. 
However, because of the added condition that $B_n \leq \frac{2 n}{(\ln n)^3}$, instead of adding a reflection at the left edge of the longest fair stretch in $[0, n^\gamma]$ (or $[0, n/(\ln n)^3]$ when $\gamma = 1$), we instead force the random walk to stay strictly inside the longest fair stretch for time $2n$. Since the longest fair stretch in $[0, n^\gamma]$ (or $[0, n/(\ln n)^3]$ when $\gamma = 1$) in the environment $\tw$ is of size $(\gamma/|\ln \a| + o(1))\ln n$, we obtain from uniform ellipticity that, $P-a.s.$,
\begin{align}
& \liminf_{n\ra\infty} \frac{(\ln n)^2}{n} \ln P_{\tw}\left( \max_{k\leq 2n} |X_k| \leq n^\gamma \; , \; X_{2n} = 0 \;,\; B_n \leq \frac{2 n}{(\ln n)^3} \right) \nonumber \\
&\quad \geq \liminf_{n\ra\infty} \frac{(\ln n)^2}{n} \ln P_{1/2}\left( \max_{k\leq 2n} |X_k| \leq \frac{\gamma}{2 |\ln \a|}\ln n \right) = - \frac{|\pi \ln \a|^2}{\gamma^2}. \label{confineLFS}
\end{align}
Note that the last equality is again obtained from Lemma \ref{SRWsmalldev}. Combining \eqref{nnconfinelb} and \eqref{confineLFS} completes the proof of the needed lower bound.

To get a corresponding upper bound, first note that by Lemma \ref{COMlem}, $P-a.s.$, considering the events $\{B_n > \frac{n}{(\ln n)^{2-\d}}\}$ and $\{B_n \leq \frac{n}{(\ln n)^{2-\d}}\}$,
\begin{align*}
& P_\w\left( \max_{k\leq 2n} |X_k| \leq n^\gamma \; , \; X_{2n} = 0 \right) \\
&\qquad \leq e^{-2I(0)n} \left\{ c_2^{n/(\ln n)^{2-\d}} + P_{\tw}\left( \max_{k\leq 2n} |X_k| \leq n^\gamma \; , \; B_n \leq \frac{n}{(\ln n)^{2-\d}} \right) \right\}.
\end{align*}
Then, since $c_2 <1$, it is enough to show that for some $\d>0$
\[
 \limsup_{n\ra\infty} \frac{(\ln n)^2}{n} \ln P_{\tw}\left( \max_{k\leq 2n} |X_k| \leq n^\gamma \;,\; B_n \leq \frac{n}{(\ln n)^{2-\d}} \right) \leq - \frac{|\pi \ln \a|^2}{\gamma^2}. 
\]
For any $\e>0$ and $n$ large enough, $B_n \leq n/(\ln n)^{2-\d} < \e n$ implies that the random walk must spend at least $(2-\e)n$ steps at sites that are fair in $\tw$. 
Theorem 3.2.1 in \cite{dzLDTA} implies that, $P-a.s.$, for all $n$ large enough the length of the longest fair stretch in $[-n^\gamma, n^\gamma]$ in the environment $\tw$ is less than $\frac{(1+\e)\gamma}{|\ln \a|} \ln n$. 
Therefore, $P-a.s.$, for $n$ sufficently large,
\[
P_{\tw}\left( \max_{k\leq 2n} |X_k| \leq n^\gamma \;,\; B_n \leq \frac{n}{(\ln n)^{2-\d}} \right) 
\leq P_{\tw}(\tau \geq \e n)
+ P_{1/2}\left( \sum_{j\leq n/(\ln n)^{2-\d}} \s_{j,n} \geq 2n(1-\e) \right),
\]
where $\tau:= \min \{ k\geq 0: \tw_{X_k} > 1/2 \}$ is the first time the random walk reaches a biased site in $\tw$ and the $\s_{j,n}$ are i.i.d.\ with common distribution equal to that of the first time a simple random walk started at $x=1$ exits the interval $[1,(1+\e)\gamma \ln n/|\ln \a|]$. 
For any fixed environment $\tw$, $\tau$ is the exit time of a simple symmetric random walk from a fixed bounded interval and thus $P_{\tw}(\tau > \e n) $ decays exponentially fast. Therefore, the proof of the upper bound is reduced to showing that for some $\d>0$,
\be\label{subexpterm2}
 \limsup_{n\ra\infty} \frac{(\ln n)^2}{n} \ln P_{1/2}\left( \sum_{j\leq n/(\ln n)^{2-\d}} \s_{j,n} \geq 2n(1-\e) \right) \leq - \frac{|\pi \ln \a|^2}{\gamma^2}. 
\ee
To this end, note that for any $\l>0$,
\be\label{chebychev}
 P_{1/2}\left( \sum_{j\leq n/(\ln n)^{2-\d}} \s_{j,n} \geq (1-\e)2n \right) \leq e^{-\l (1-\e) 2 n} \left\{ E_{1/2}\left[ e^{\l \s_{1,n}} \right] \right\}^{n/(\ln n)^{2-\d}}. 
\ee
To complete the proof of the upper bound, we need the following Lemma.
\begin{lem}\label{MGFub}
Let $2\ell$ be an integer greater than $1$ and let $\e \in(0,1)$. 
Then, there exists a constant $C_1<\infty$ depending only on $\e$ such that for $\l(\e,\ell) := \frac{(1-\e)^2 \pi^2}{8\ell^2}$,
\[
 E_{1/2}\left[ e^{\l(\e,\ell) \s} \right] < 1 + \frac{C_1}{\ell},
\]
where $\s$ is the first time a simple random walk started at $x=1$ exits the interval $[1, 2\ell -1]$.
\end{lem}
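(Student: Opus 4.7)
The plan is to obtain an explicit closed-form expression for the moment generating function by solving the associated discrete boundary value problem, and then Taylor-expand. Set $u(x) := E_{1/2}^x[e^{\l \s}]$ for $x \in \{0,1,\ldots,2\ell\}$, so $u(0) = u(2\ell) = 1$ and, by conditioning on the first step, $u(x) = \tfrac{1}{2}e^\l(u(x-1) + u(x+1))$ for $1 \leq x \leq 2\ell - 1$. This linear second-order recurrence has characteristic equation $r^2 - 2e^{-\l}r + 1 = 0$, with complex roots $e^{\pm i\phi}$ where $\phi := \arccos(e^{-\l}) \in (0,\pi/2)$. The general solution $u(x) = A\cos(\phi x) + B\sin(\phi x)$, combined with the boundary data, yields $A = 1$ and $B = \tan(\ell\phi)$, whence
\[
u(1) \;=\; \frac{\cos((\ell-1)\phi)}{\cos(\ell\phi)} \;=\; \cos\phi + \sin\phi\,\tan(\ell\phi).
\]

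For this formula to make sense we need $\ell\phi < \pi/2$, equivalently $\l < -\ln\cos(\pi/(2\ell))$. The elementary bound $-\ln\cos(x) \geq x^2/2$ on $(0,\pi/2)$ gives $-\ln\cos(\pi/(2\ell)) \geq \pi^2/(8\ell^2)$, so the choice $\l(\e,\ell) = (1-\e)^2\pi^2/(8\ell^2)$ sits comfortably below the threshold for every $\ell \geq 1$ and every $\e \in (0,1)$.

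To bound $u(1) - 1$ I would first discard the negative piece via $\cos\phi \leq 1$ to write $u(1) - 1 \leq \sin\phi\,\tan(\ell\phi)$. The first factor is immediate, since $\sin\phi = \sqrt{1-e^{-2\l}} \leq \sqrt{2\l} = (1-\e)\pi/(2\ell)$. For the second factor I need a uniform-in-$\ell$ bound on $\tan(\ell\phi)$, and this is the main obstacle. A sharper Taylor expansion, $-\ln\cos(c\pi/(2\ell)) = c^2\pi^2/(8\ell^2) + O(\ell^{-4})$, shows that $\ell\phi \to (1-\e)\pi/2$ as $\ell \to \infty$, so $\cos(\ell\phi) \to \sin(\e\pi/2) > 0$. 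Quantifying this gives $\ell\phi \leq (1-\e/2)\pi/2$, and hence $\cos(\ell\phi) \geq \sin(\e\pi/4)$, for all $\ell \geq \ell_0(\e)$. For the finitely many smaller values of $\ell$, the strict inequality $\ell\phi < \pi/2$ established above already guarantees $\tan(\ell\phi) < \infty$, and these cases are absorbed into the constant.

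Combining the two factor bounds yields $E_{1/2}^1[e^{\l(\e,\ell)\s}] - 1 \leq C_1(\e)/\ell$ with an explicit constant (essentially $\pi/\sin(\e\pi/4)$ for large $\ell$), which is the statement of the lemma. The role of the precise coefficient $(1-\e)^2$ in $\l(\e,\ell)$ is exactly to leave the positive margin $\pi/2 - \ell\phi \gtrsim \e$ needed to keep $\cos(\ell\phi)$ bounded away from zero uniformly in $\ell$; this is the only delicate point in the argument.
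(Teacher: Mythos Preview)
Your proof is correct and follows essentially the same route as the paper: derive the closed form $E_{1/2}[e^{\lambda\sigma}]=\cos\phi+\sin\phi\,\tan(\ell\phi)$ with $\phi=\arccos(e^{-\lambda})$, bound $\cos\phi\le 1$ and $\sin\phi\le\sqrt{2\lambda}$, and control $\tan(\ell\phi)$ uniformly in $\ell$. The only difference is that your handling of $\tan(\ell\phi)$ takes an unnecessary detour through asymptotics plus finitely many exceptional $\ell$: the inequality $-\ln\cos(x)\ge x^2/2$ that you already stated is equivalent (substitute $x=\arccos(e^{-\lambda})$) to $\arccos(e^{-\lambda})\le\sqrt{2\lambda}$, which immediately gives $\ell\phi\le(1-\e)\pi/2$ for \emph{every} $\ell$. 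This is precisely the paper's argument, and it yields the explicit constant $C_1=\tfrac{(1-\e)\pi}{2}\tan\!\big(\tfrac{(1-\e)\pi}{2}\big)$ with no case analysis.
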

Using Lemma \ref{MGFub}, we complete the proof of the upper bound in Proposition \ref{nnconfine}. 
Let $\l = \l(\e,\ell(n)) = \frac{(1-\e)^2 \pi^2}{8\ell(n)^2}$ in \eqref{chebychev}, where $2\ell(n)-1 = \fl{(1+\e)\gamma \ln n/|\ln \a|}$. Then Lemma \ref{MGFub} implies that for any $\d \in (0,1)$,
\begin{align*}
 & \limsup_{n\ra\infty} \frac{(\ln n)^2}{n} \ln P_{1/2}\left( \sum_{j\leq n/(\ln n)^{2-\d}} \s_{j,n} \geq (1-\e)2n \right)  \\
& \qquad \leq \limsup_{n\ra\infty} - \frac{(1-\e)^3 \pi^2 (\ln n)^2}{4 \ell(n)^2} +  (\ln n)^\d \ln \left( 1+ \frac{C_1}{\ell(n)} \right) \\
& \qquad = - \frac{(1-\e)^3 |\pi \ln \a|^2}{(1+\e)^2 \gamma^2} .
\end{align*}
Taking $\e\ra 0$ finishes the proof of \eqref{subexpterm2}. 
It remains only to give the proof of Lemma \ref{MGFub}. 
\begin{proof}[Proof of Lemma \ref{MGFub}]
As was shown in the proof of Lemma 4 in \cite{gNodes}, the moment generating function for the first exit time of a simple random walk from an interval may be solved explicitly. In particular, for $0 \leq \lambda < \lambda_{\rm crit} (\ell) = -\ln\left(\cos\frac{\pi}{2\ell}\right)$ we obtain that 
\[
 E_{1/2}\left[ e^{\l \s} \right] = \frac{\cos (c_\lambda(\ell -1))}{\cos(c_\lambda\ell)} = e^{-\l} + \sqrt{1-e^{-2\l}} \tan( c_\l \ell),
\]
where $c_\lambda = \arccos(e^{-\lambda})$.
In particular, we get the simple upper bound 
\be\label{MGFsub}
 E_{1/2}\left[ e^{\l \s} \right] < 1 + \sqrt{2 \l} \tan( c_\l \ell).
\ee
Recalling the definitions of $\l(\e,\ell)$ and $c_\l$, and using the fact that $\arccos(e^{-x}) < \sqrt{2x}$ for any $x>0$ we obtain that 
\[
 c_{\l(\e,\ell)}\ell = \arccos\left(\exp\left\{ - \frac{(1-\e)^2\pi^2}{8\ell^2} \right\} \right) \ell < \frac{(1-\e)\pi}{2}. 
\]
Therefore, $\tan(c_{\l(\e,\ell)} \ell)$ is bounded above, uniformly in $\ell$. 
Recalling \eqref{MGFsub} we obtain
\[
 E_{1/2}\left[ e^{\l(\e,\ell) \s} \right] < 1 + \sqrt{2 \l(\e,\ell)} \tan\left( \frac{(1-\e)\pi}{2} \right) = 1 + \frac{(1-\e)\pi}{2\ell} \tan\left( \frac{(1-\e)\pi}{2} \right). 
\]
This completes the proof of Lemma \ref{MGFub} with $C_1 = \frac{(1-\e)\pi}{2} \tan\left( \frac{(1-\e)\pi}{2} \right)$. 
\end{proof}
\end{proof}

We now are ready to give the proof of the main result in this section.

\begin{proof}[Proof of Theorem \ref{nonnestlingcase}]
Let $\tw$ the marginally nestling environment obtained from the non-nestling environment $\w$ as defined in \eqref{twdef}. 
Lemma \ref{COMlem} implies that 
\begin{align*}
 P_\w\left( \max_{k\leq 2n} | X_k | \geq \frac{n}{(\ln n)^\b} \; , \; X_{2n} = 0 \right)
&= E_{\tw} \left[ \frac{dP_{\w}}{dP_{\tw}}(X_{[0,2n]}) \indd{\max_{k\leq 2n} | X_k | \geq \frac{n}{(\ln n)^\b} \; , \; X_{2n} = 0} \right] \\
&\leq e^{-2n I(0)} P_{\tw}\left( \max_{k\leq 2n} | X_k | \geq \frac{n}{(\ln n)^\b} \; , \; X_{2n} = 0 \right). 
\end{align*}
Then, since $\tw$ is a marginally nestling environment, the proof of Theorem \ref{margnestlingcase} implies that for $\d$ sufficiently small, $P-a.s.$, for all $n$ sufficiently large,
\[
 P_\w\left( \max_{k\leq 2n} | X_k | \geq \frac{n}{(\ln n)^\b} \; , \; X_{2n} = 0 \right) \leq e^{-2n I(0) - \d \frac{n}{(\ln n)^\b} }. 
\]
This combined with Corollary \ref{nnatzero} completes the proof of \eqref{nndispub}. 
\eqref{nndisplb} follows directly from Proposition \ref{nnconfine} and Corollary \ref{nnatzero}. 
\end{proof}

\section{Further Refinements}\label{openproblemsection}

We believe that in the marginally nestling and non-nestling regimes, the maximal displacement of bridges is really of the order $n/(\ln n)^2$. 
The proofs of Theorems \ref{margnestlingcase} and \ref{nonnestlingcase} suggest that the most likely way for the random walk to be back at the origin after $2n$ steps is to go quickly to a long interval $I$ with $\w_x = \w_{\min}$ for all $x\in I$ and then stay in the interval $I$ for almost $2n$ steps before returning quickly to the origin. 
However, the longest such interval $I\subset [-n/(\ln n)^\beta, n/(\ln n)^\beta]$ has length of order $\ln n/|\ln \a|$ for any $\beta > 0$. Thus, it is difficult to show that the maximal displacement is at least $n/(\ln n)^\beta$ for any $\beta > 2$ when conditioned on $\{X_{2n}=0\}$. Nevertheless, for any fixed $\beta > 2$ the longest interval $I \subset [-n/(\ln n)^2, n/(\ln n)^2]$ with $\w_x = \w_{\min}$ for all $x\in I$ is with high probability not contained in $[-n/(\ln n)^\beta, n/(\ln n)^\beta]$. This leads us to the following conjecture. 

%
\begin{con}\label{probconv}
 Let Assumption \ref{UEIIDasm} hold, and let $\w_{\min} \geq 1/2$ and $P(\w_0 = \w_{\min}) = \a>0$. Then, for any $\b>2$,
\begin{align*}
 \lim_{n\ra\infty} P_\w \left( \max_{k\leq 2n} |X_k| \geq \frac{n}{(\ln n)^\b}  \; \Bigl| \; X_{2n} = 0 \right) = 1, \quad \text{in $P$-probability.}
\end{align*}
\end{con}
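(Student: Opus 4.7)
The plan is to sharpen both sides of the ratio
\[
\frac{P_\w(\max_{k\leq 2n}|X_k| < n/(\ln n)^\b,\; X_{2n}=0)}{P_\w(X_{2n}=0)}
\]
beyond the $o(n/(\ln n)^2)$ accuracy of Corollary \ref{nnatzero} and Proposition \ref{nnconfine}, and to show that for $\b > 2$ it tends to zero in $P$-probability. The change-of-measure Lemma \ref{COMlem} reduces the non-nestling case to the marginally nestling environment $\tw$, so I work throughout under Assumption \ref{MNasm}. The heart of the argument is to extract the $n\ln\ln n/(\ln n)^3$-order correction, which originates from the precise length of the longest stretch of fair sites accessible to the walk.

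For the numerator, I would rerun the proof of Proposition \ref{mnconfine} with $n^\gamma$ replaced by $n/(\ln n)^\b$. The key input is a second-order Erdos--Renyi-type estimate sharpening Theorem 3.2.1 in \cite{dzLDTA}: with $P$-probability tending to one, the longest fair stretch inside $[-n/(\ln n)^\b, n/(\ln n)^\b]$ has length at most $L_\b := (\ln n - \b\ln\ln n)/|\ln \a| + O(\ln\ln\ln n)$. Inserting this into the Chebyshev/small-deviation argument of that proposition gives, in $P$-probability,
\[
\ln P_\w\!\left(\max_{k\leq 2n}|X_k| < \tfrac{n}{(\ln n)^\b},\; X_{2n}=0\right) \leq -\frac{|\pi \ln \a|^2 n}{4(\ln n)^2}\!\left(1 + \frac{2\b\ln\ln n}{\ln n}(1+o(1))\right).
\]

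For the denominator, I would aim at the matching refinement
\[
\ln P_\w(X_{2n}=0) \geq -\frac{|\pi \ln \a|^2 n}{4(\ln n)^2}\!\left(1 + \frac{2\b'\ln\ln n}{\ln n}(1+o(1))\right)
\]
for some $\b' \in (2,\b)$. The natural construction, suggested by the authors' heuristic, is to locate the longest fair stretch $I_n$ within the larger window $[-n/(\ln n)^{\b'}, n/(\ln n)^{\b'}]$; with $P$-probability close to one this stretch has length at least $(\ln n - \b'\ln\ln n)/|\ln \a| - O(\ln\ln\ln n)$, and then the walk is forced to spend $2n - o(n)$ steps inside it. Subtracting from the numerator bound above produces an excess of order $(\b - \b')|\pi\ln\a|^2 n\ln\ln n/(2(\ln n)^3) \to \infty$, which yields the conjecture.

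The hard part will be quantifying the quenched cost of routing the walk from $0$ to a neighborhood of $I_n$ (which may lie at distance $\Theta(n/(\ln n)^{\b'})$) and back, while being at $0$ exactly at time $2n$. A naive strategy forcing deterministic outbound and return excursions by uniform ellipticity and the annealed backtracking bound of Lemma 2.2 in \cite{dpzTE1D} charges a cost $\exp\{-\Theta(n/(\ln n)^{\b'})\}$, which swamps the $\exp\{-\Theta(n\ln\ln n/(\ln n)^3)\}$ saving whenever $\b' < 3$. A workable proof has to exploit that the unconditioned walk in $\tw$ has positive speed $v_P > 0$ and so naturally traverses distance $M$ in $(1+o(1))M/v_P$ steps without any exponential penalty; one then pays only for constrained backtracking on a sub-linear portion of the trajectory near each endpoint of the excursion and for small-deviation confinement inside $I_n$ during the remaining $2n - o(n)$ steps, in the spirit of the multi-scale regeneration analysis in \cite{ppQSubexp,ppzSubexp}. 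Making this heuristic precise at the refined $n\ln\ln n/(\ln n)^3$ scale is the genuinely new input required, and is why the statement is left as a conjecture.
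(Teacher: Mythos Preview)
The paper does not prove this statement; it is explicitly stated as Conjecture~\ref{probconv}. The only supporting content the paper offers is the heuristic paragraph preceding the conjecture, which suggests that the walk, conditioned on $\{X_{2n}=0\}$, should travel to the longest interval $I$ of sites with $\w_x=\w_{\min}$ inside $[-n/(\ln n)^2,n/(\ln n)^2]$ and spend nearly all $2n$ steps there, and observes that with high $P$-probability this $I$ is not contained in $[-n/(\ln n)^\b,n/(\ln n)^\b]$ when $\b>2$.

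Your sketch follows precisely this heuristic but pushes it further: you correctly isolate the relevant second-order scale $n\ln\ln n/(\ln n)^3$, trace it to the Erd\H{o}s--R\'enyi fluctuation in the length of the longest fair run, and write down the matching upper and lower expansions for numerator and denominator. You also correctly identify the genuine obstruction, namely the cost of routing the walk from $0$ to a neighborhood of $I_n$ and back so as to be at $0$ exactly at time $2n$, and you note that the crude uniform-ellipticity or annealed-backtracking bounds are off by a factor that swamps the $n\ln\ln n/(\ln n)^3$ gain whenever $\b'<3$. Your proposed fix---use that the unconditioned walk in $\tw$ has positive speed, so the transit itself costs nothing exponentially---is the natural idea, but you have not carried it out, and neither does the paper. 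In short, your proposal is a sound and sharper restatement of the paper's own heuristic together with an accurate diagnosis of why it is not yet a proof; there is nothing to compare it against, because the paper contains no proof either.
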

\bibliographystyle{alpha}
\bibliography{Bridge}

\end{document}